\newcommand{\ex}{\mathrm{ex}}
\newcommand{\Ex}{\mathrm{Ex}}
\newcommand{\up}{\mathrm{up}}
\newtheorem{theorem}{Theorem}
\newtheorem{lemma}{Lemma}
\newtheorem{prop}{Proposition}
\newtheorem{defin}{Definition}
\newtheorem{fact}{Fact}
\newtheorem{cor}{Corollary}
\def\le{\leqslant}
\def\ge{\geqslant}
\begin{document}

\title{A hierarchy of maximal intersecting triple systems}

\author{
    Joanna Polcyn
    \\A. Mickiewicz University\\Pozna\'n, Poland\\{\tt joaska@amu.edu.pl}
  \and Andrzej Ruci\'nski\thanks{Research supported by the Polish NSC
grant 2014/15/B/ST1/01688. }
\\A. Mickiewicz University\\Pozna\'n, Poland\\{\tt rucinski@amu.edu.pl}  }

\date{\today}

\maketitle

\begin{abstract}
We reach beyond the celebrated theorems of Erd\H os-Ko-Rado and  Hilton-Milner, and, a recent
theorem of Han-Kohayakawa, and determine all  maximal intersecting triples systems. It turns out
that for each $n\ge7$ there are  exactly 15 pairwise non-isomorphic such systems (and 13 for
$n=6$). We present our result in terms of a hierarchy of Tur\'an numbers $\ex^{(s)}(n; M_2^{3})$,
$s\ge1$, where $M_2^{3}$ is a pair of disjoint triples. Moreover, owing to our unified approach, we provide
short proofs of the above mentioned results (for triple systems only).

The triangle $C_3$ is defined as $C_3=\{\{x_1,y_3,x_2\},\{x_1,y_2,x_3\}, \{x_2,y_1,x_3\}\}$.
Along the way we show that the largest intersecting triple system $H$ on $n\ge6$ vertices, which
is not a star and is triangle-free,  consists of $\max\{10,n\}$ triples.
 This facilitates our main proof's philosophy    which is to assume that $H$
 contains a copy of the triangle and analyze how the
remaining edges of $H$ intersect that copy.
\end{abstract}

\section{Introduction}\label{intro}

\emph{A hypergraph} is a synonym for \emph{set system} and in this context the sets are called \emph{edges}. The elements of all the sets are called \emph{vertices}.
We often identify the edge set of a hypergraph $H$ with the hypergraph itself but never forget about the underlying \emph{vertex set} $V(H)$.
 A hypergraph is called \emph{intersecting} if every two of its edges intersect. A hypergraph is \emph{$k$-uniform}, a $k$-graph, for short, if every edge has size $k$.

Although in this paper we prove results about triple systems, or $3$-uniform hypergraphs, we begin
with some definitions and results valid for all $k$-graphs,
$k\ge2$.

The celebrated Erd\"os-Ko-Rado theorem \cite{EKR} determines the maximum size of a $k$-uniform
intersecting family. Since we formulate this result in terms of the Tur\'an numbers, we need some
more definitions and notation.
  Given a family of $k$-graphs $\mathcal G$, we call a $k$-graph $H$ \emph{$\mathcal G$-free}
if for all $G \in \mathcal G$ we have  $G \nsubseteq H$.
\begin{defin}
    \rm For a family of $k$-graphs $\mathcal G$ and an integer $n\ge1$, the \textit{ Tur\'an number (of the 1st order)} is defined as
    $$
    \mathrm{ex}^{(1)}_k(n; \mathcal G):=\mathrm{ex}_k(n;\mathcal G)=\max\{|E(H)|:|V(H)|=n\;\mbox{ and $H$ is
        $\mathcal G$-free}\}.
    $$
    Every $n$-vertex $\mathcal G$-free $k$-graph with $\ex_k(n;\mathcal G)$
    edges is called  \emph{extremal (1-extremal) for~$\mathcal G$}.
    We denote by $\mathrm{Ex}_k(n;\mathcal G)=\mathrm{Ex}^{(1)}_k(n;\mathcal G)$
    the family of all $n$-vertex $k$-graphs which are extremal for $\mathcal G$.
\end{defin}

In \cite{JPRr} the authors introduce a hierarchy of Tur\'an numbers, where in each  generation we consider only $k$-graphs
which are not sub-$k$-graphs of extremal $k$-graphs from all previous generations. The next
definition is iterative.

\begin{defin}\rm  For a family of $k$-graphs $\mathcal G$ and  integers $s,n\ge1$,
    \textit{the Tur\'an number of the $(s+1)$-st order} is defined as
    \begin{eqnarray*}
        \mathrm{ex}^{(s+1)}_k(n;\mathcal G)=\max\{|E(H)|:|V(H)|=n,\; \mbox{$H$ is
            $\mathcal G$-free, and }\\
        \forall H'\in \mathrm{Ex}^{(1)}_k(n;\mathcal G)\cup...\cup\mathrm{Ex}^{(s)}_k(n;\mathcal G),  H\nsubseteq H'\},
    \end{eqnarray*}
    if such a $k$-graph $H$ exists.
    An $n$-vertex $\mathcal G$-free $k$-graph $H$ is called \textit{(s+1)-extremal for} $\mathcal G$ if $|E(H)| = \mathrm{ex}^{(s+1)}_k(n;\mathcal G)$ and  $\forall H'\in \mathrm{Ex}^{(1)}_k(n;\mathcal G)\cup...\cup\mathrm{Ex}^{(s)}_k(n;\mathcal G),  H\nsubseteq H'$; we denote by $\mathrm{Ex}^{(s+1)}_k(n;\mathcal G)$ the family of $n$-vertex $k$-graphs which are $(s+1)$-extremal for $\mathcal G$.
\end{defin}

\noindent We will often write $\mathrm{ex}^{(s)}_k(n;G)$ for $\mathrm{ex}^{(s)}_k(n;\{G\})$ and
$\mathrm{Ex}^{(s)}_k(n;G)$ for $\mathrm{Ex}^{(s)}_k(n;\{G\})$.

A {\it star} is a hypergraph with a vertex, called its center, contained in all the edges.
Obviously, a star is intersecting. An $n$-vertex, $k$-uniform star with ${n-1\choose k-1}$ edges is
called {\it full} and denoted by $S^k_n$. Let $M_2^k$ be a $k$-graph consisting of two disjoint
edges.
    \begin{theorem}[\cite{EKR}]\label{erdos}
        For $n\ge 2k$,
        $$\mathrm{ex}_k(n;M^k_2)=\binom{n-1}{k-1}.$$
         Moreover, for $n\ge2k+1$, $\mathrm{Ex}_k(n;M^k_2)=\{S^k_n\}$.
    \end{theorem}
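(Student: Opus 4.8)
\textbf{Proof plan for Theorem~\ref{erdos} (Erd\H os--Ko--Rado for $M_2^k$).}

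The plan is to prove the extremal bound by a compression (shifting) argument. First I would set up the standard shift operation $S_{ij}$ for $1\le i<j\le n$: for an edge $e$ of $H$ with $j\in e$, $i\notin e$, and $(e\setminus\{j\})\cup\{i\}\notin H$, replace $e$ by $(e\setminus\{j\})\cup\{i\}$; otherwise leave $e$ alone. The first routine step is to check that shifting preserves both the number of edges and the property of being $M_2^k$-free (equivalently, intersecting); the latter is the one genuine verification — if two shifted edges were disjoint, one traces back to a disjoint pair, or to a pair whose disjointness forces a third edge of $H$ to have been disjoint from one of them, a short case analysis. Iterating shifts over all pairs $i<j$ terminates (e.g.\ because $\sum_{e\in H}\sum_{v\in e} v$ strictly decreases) and yields a \emph{shifted} intersecting $k$-graph $H^*$ with $|H^*|=|H|$ on vertex set $[n]$, meaning: if $e\in H^*$, $j\in e$, $i<j$, $i\notin e$, then $(e\setminus\{j\})\cup\{i\}\in H^*$.

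Next I would exploit shiftedness directly. Claim: since $n\ge 2k$, a shifted intersecting $k$-graph $H^*$ satisfies $1\in e$ for every $e\in H^*$, i.e.\ $H^*\subseteq S_n^k$, which immediately gives $|H|=|H^*|\le\binom{n-1}{k-1}$. To see the claim, suppose $e\in H^*$ with $1\notin e$; write $e=\{a_1<\dots<a_k\}$ so $a_t\ge t+1$ for all $t$, hence $a_k\ge k+1$ and more generally the "shift down by one" edge $e'=\{a_1-1,\dots,a_k-1\}$ — obtained by $k$ successive single shifts, each legal by shiftedness since at each stage the target slot is unoccupied — lies in $H^*$; repeating, one reaches an edge disjoint from $e$ (for instance, shifting $e$ down far enough produces $\{1,2,\dots,k\}$ while the relation $a_k\ge k+1$, valid because $1\notin e$, lets us also keep a "high" copy of $e$ around — here one uses $n\ge 2k$ to have room for two disjoint $k$-sets), contradicting that $H^*$ is intersecting. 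This is the step I expect to be the main obstacle: making the disjointness argument clean requires either the classical lemma "a shifted intersecting family with $n\ge 2k$ has a common element" or a self-contained induction on $n$ and on the number of shifts; I would cite or reprove the former.

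Finally, for the uniqueness statement when $n\ge 2k+1$, I would argue that equality forces $H$ itself (not merely $H^*$) to be the full star $S_n^k$. One approach: show that if $H$ is intersecting, $|H|=\binom{n-1}{k-1}$, and $n\ge 2k+1$, then $H$ has a common vertex. Consider a vertex $v$ of maximum degree; if $H$ is not a star, pick an edge $e\not\ni v$; since $H$ is intersecting, every edge meets $e$, so $\sum_{u\in e}\deg(u)\ge |H|+ (k-1)$ counting $e$ with multiplicity (each of the other $|H|-1$ edges meets $e$ in $\ge 1$ vertex and $e$ itself is counted $k$ times), whence some $u\in e$ has $\deg(u)\ge (|H|+k-1)/k$; combined with a complementary counting bound on how large a non-star intersecting family can be — here the strict inequality $n\ge 2k+1$ is what creates slack, since for $n=2k$ the two "halves" construction $\{e:|e\cap[k]|\ge \lceil(k+1)/2\rceil\}$-type families tie the bound — one derives a contradiction with maximality of the chosen family. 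Alternatively, and perhaps more cleanly, I would invoke that the only shifted extremal family is $S_n^k$ and then run the shift operation \emph{backwards}: if some $S_{ij}$ strictly changed $H$ at the last step before reaching $S_n^k$, one checks the pre-image cannot have been intersecting and extremal unless it was already a full star on a different center — impossible once $n\ge 2k+1$ pins the center. I would present whichever of these is shortest given the paper's later needs.
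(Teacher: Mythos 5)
The paper does not prove this statement at all: Theorem~\ref{erdos} is quoted verbatim from Erd\H{o}s--Ko--Rado \cite{EKR} and used as a black box, so any proof you supply is necessarily ``different from the paper's.'' Judged on its own terms, your shifting setup is the right classical machinery, but the pivotal claim on which your upper bound rests is false. You assert that a shifted intersecting $k$-graph $H^*$ on $[n]$ with $n\ge 2k$ must satisfy $1\in e$ for every $e\in H^*$, and you lean on ``the classical lemma: a shifted intersecting family with $n\ge 2k$ has a common element.'' No such lemma exists. Counterexample: $H^*=\binom{[2k-1]}{k}$ (e.g.\ all triples inside $\{1,\dots,5\}$ for $k=3$), viewed on any vertex set $[n]$ with $n\ge 2k$. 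This family is shifted (replacing an element by a smaller one keeps the set inside $[2k-1]$), it is intersecting (two $k$-subsets of a $(2k-1)$-set must meet), and it has no common element. Your attempted derivation fails on exactly such edges: from $e=\{2,3,4\}$ (for $k=3$) downward shifts only ever produce $3$-subsets of $[4]$, all of which meet $e$, so no disjoint pair is ever reached. Since the claim is the entire content of the bound $|H|\le\binom{n-1}{k-1}$ in your plan, the proof of the first assertion of the theorem is not established.

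The standard way to finish from shiftedness is not ``shifted implies star'' but a double induction: for $n=2k$ pair each $k$-set with its complement to get $|H|\le\frac12\binom{2k}{k}=\binom{2k-1}{k-1}$; for $n>2k$ split $H^*$ into $H_0=\{e: n\notin e\}$ and $H_1=\{e\setminus\{n\}: n\in e\in H^*\}$, use shiftedness to show $H_1$ is an intersecting $(k-1)$-graph on $[n-1]$ (if $e_1\setminus\{n\}$ and $e_2\setminus\{n\}$ were disjoint, pick $j\notin e_1\cup e_2$, $j<n$, and shift $n\to j$ in $e_2$ to produce an edge disjoint from $e_1$), and add the two inductive bounds $\binom{n-2}{k-1}+\binom{n-2}{k-2}=\binom{n-1}{k-1}$. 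Your uniqueness discussion for $n\ge 2k+1$ is also only a collection of candidate strategies (a degree-counting sketch with an unproved ``complementary counting bound,'' or an unexecuted reverse-shift analysis), neither of which is carried to a conclusion; as written it does not establish $\Ex_k(n;M_2^k)=\{S_n^k\}$. Since the paper only cites this result, the cleanest fix is either to do the same, or to write out the induction above together with the standard reverse-shift argument for uniqueness.
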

A historically first example of a Tur\'an number of the 2nd order is due to Hilton and Milner
\cite{HM} (see \cite{FF2} for a simple proof). They determined the maximum size of an intersecting
$k$-graph which is not a star.

\begin{theorem}[\cite{HM}]\label{nti}
    For $n\ge 2k+1$,
    $$\mathrm{ex}^{(2)}_k(n;M^k_2)={n-1\choose k-1}-{n-k-1\choose k-1}+1.$$
     Moreover, for $k=3$, $\mathrm{Ex}^{(2)}_3(n;M^3_2)=\{H_1(n),H_2(n)\}$,
    while for $k\ge 4$, $\mathrm{Ex}^{(2)}_k(n;M^k_2)=\{H^k_1(n)\}$.
\end{theorem}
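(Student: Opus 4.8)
The plan is to prove the lower bound for every $k$ directly, observe that for $k\ge4$ the matching upper bound and the uniqueness of $H_1^k(n)$ are exactly \cite{HM} (with the short proof of \cite{FF2}), and then give a self-contained argument for $k=3$ organised around the triangle. For the lower bound, fix a vertex $z$ and a $k$-set $B$ with $z\notin B$, and let $H_1^k(n)$ consist of $B$ together with all $k$-sets through $z$ that meet $B$. Any two members through $z$ meet at $z$ and every other member meets $B$, so $H_1^k(n)$ is intersecting; it is not a star, since $B$ and a member through $z$ avoiding a fixed $b\in B$ have empty intersection; and it is trivially not contained in $S_n^k$. Counting gives $|H_1^k(n)|=\binom{n-1}{k-1}-\binom{n-k-1}{k-1}+1$. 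For $k=3$ take in addition $H_2(n)$, the family of all triples meeting a fixed triple $T$ in at least two vertices: any two of its edges meet because any two $2$-subsets of $T$ do, it is not a star, and $|H_2(n)|=3(n-3)+1=3n-8$, which is the value above at $k=3$. Hence $\ex^{(2)}_k(n;M^k_2)\ge\binom{n-1}{k-1}-\binom{n-k-1}{k-1}+1$, and it remains to prove the reverse inequality together with the uniqueness statement for $k=3$.

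So let $\mathcal F$ be $2$-extremal for $M^3_2$; it is intersecting and not a star, so $|\mathcal F|\ge 3n-8$, which for $n\ge7$ strictly exceeds $\max\{10,n\}$. Invoking the fact (proved earlier in the paper) that a triangle-free intersecting non-star triple system on $n\ge6$ vertices has at most $\max\{10,n\}$ edges, it follows that $\mathcal F$ must contain a copy of the triangle $C_3$. Fix such a copy and call $x_1,x_2,x_3$ its three \emph{pivots} --- each lying in two edges of $C_3$ --- and $y_1,y_2,y_3$ its \emph{pendants}, where $y_i$ lies in the edge of $C_3$ disjoint from $x_i$. Every $F\in\mathcal F$ meets all three edges of $C_3$; splitting cases by the number of pendants in $F$, a short check shows that $F$ is either a \emph{pivot triple} (one containing at least two pivots; there are exactly $3(n-3)+1=3n-8$ of these, including $T=\{x_1,x_2,x_3\}$), or a \emph{pendant triple} $\{x_i,y_i,v\}$ for some $i$, or the triple $\{y_1,y_2,y_3\}$. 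If \emph{every} edge of $\mathcal F$ is a pivot triple, then $\mathcal F$ is a subfamily of the $(3n-8)$-element family of all pivot triples, hence equals it, so $|\mathcal F|=3n-8$ and $\mathcal F\cong H_2(n)$.

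Otherwise $\mathcal F$ contains an edge with at most one pivot. First I would show $\{y_1,y_2,y_3\}\notin\mathcal F$: if it were present, every edge of $\mathcal F$ would contain some $y_i$, at most $9$ pivot triples could meet $\{y_1,y_2,y_3\}$, and two pendant triples with distinct indices must share a vertex, which together keep $|\mathcal F|$ below $3n-8$ for all but finitely many $n$. So, after relabelling, $\mathcal F$ contains $e_0=\{x_1,y_1,v\}$ with $x_2,x_3\notin e_0$. Since every pivot triple with pivot pair $\{x_2,x_3\}$ must meet $e_0$, $\mathcal F$ has at most $2n-3$ pivot triples, hence contains linearly many pendant triples, and --- because pendant triples of index $2$ or $3$ are severely limited by having to meet $e_0$ --- linearly many of the form $\{x_1,y_1,w\}$. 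Every edge of $\mathcal F$ must meet each of these, forcing every edge to contain $x_1$ or $y_1$; a short intersecting analysis on the six vertices of $C_3$ bounds by a constant, in fact (for large $n$) by $1$, the number of edges through $y_1$ but not $x_1$. Since $\mathcal F$ is not a star there is exactly one such edge $B$, so $\deg_{\mathcal F}(x_1)=|\mathcal F|-1$; every edge other than $B$ contains $x_1$ and meets $B$, and there are precisely $\binom{n-1}{2}-\binom{n-4}{2}=3n-9$ triples through $x_1$ meeting $B$, so $|\mathcal F|\le 3n-8$ with equality only when $\mathcal F$ is all of them together with $B$, i.e. $\mathcal F\cong H_1(n)$. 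Finally $H_1(n)\not\cong H_2(n)$ since $H_1(n)$ has a vertex of degree $3n-9$ whereas $\Delta(H_2(n))=2n-5$.

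The hard part will be this last case: promoting the rough counts (``linearly many pendant triples $\{x_1,y_1,w\}$'', ``a bounded number of edges through $y_1$ only'') to the exact statement that $\deg_{\mathcal F}(x_1)=|\mathcal F|-1$ and that the one missing edge forces $\mathcal F=H_1(n)$, ruling out every other intersecting assembly of $3n-8$ pivot and pendant triples. As is typical for such rigidity arguments, the finitely many small values of $n$ (roughly $7\le n\le13$) on which the asymptotic bookkeeping does not bite have to be settled by direct inspection, which is feasible because the fixed copy of $C_3$ already pins down six of the $n$ vertices.
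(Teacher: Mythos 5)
Your construction of the lower bound, the reduction ``any $2$-extremal family has at least $3n-8>\max\{10,n\}$ edges, hence contains a triangle'' via the triangle-free lemma, and the taxonomy of edges relative to a fixed $C_3$ (pivot triples, pendant triples $\{x_i,y_i,v\}$, and $\{y_1,y_2,y_3\}$) are all correct and consistent with the paper's philosophy. But your route then diverges from the paper's: the paper does not run a bespoke counting argument for the second-order Tur\'an number at all. It first classifies \emph{all} maximal intersecting $3$-graphs on $6$ vertices (Proposition~\ref{6}), proves that any triangular ``heart'' of a maximal intersecting family induces a maximal intersecting family (Fact~\ref{maxheart}), and then splits into the heartless case (yielding only $F_7$ and $F_{10}$, both of bounded size) and the heart case (where $H[U]\cong H_i(6)$ forces $H\cong H_i(n)$). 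The Hilton--Milner statement for $k=3$ then falls out by comparing the sizes $3n-8,\ 2n-2,\ n+4,\ 10,\ 7$. That classification buys all six orders of the hierarchy at once; your approach, if completed, would buy only the second-order statement.

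The problem is that your upper-bound/uniqueness argument is not actually completed, and the gaps are exactly where the difficulty lives. The exclusion of $\{y_1,y_2,y_3\}$ is argued only ``for all but finitely many $n$''; the final case rests on ``linearly many pendant triples'', ``bounded by a constant, in fact (for large $n$) by $1$'', and a promised rigidity step forcing $\deg(x_1)=|\mathcal F|-1$ and $\mathcal F\cong H_1(n)$ that is never carried out. The theorem is claimed for every $n\ge 7$, and at $n=7$ the crude counts do not close (e.g.\ with $\{y_1,y_2,y_3\}$ present one gets an a priori bound of $1+9+(\text{pendant triples})$, which exceeds $3n-8=13$ until the pairwise intersection constraints among pendant triples of different indices are analysed); your suggestion to settle $7\le n\le 13$ ``by direct inspection'' is not a proof, since the triangle pins down only six vertices and the remaining structure is not small. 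To close the argument you would either need to make the intersecting analysis of pendant triples exact for all $n\ge7$ (essentially redoing Frankl--F\"uredi's argument), or adopt the paper's device: observe that a $2$-extremal family is maximal intersecting, so Fact~\ref{maxheart} and Proposition~\ref{6} reduce everything to the $6$-vertex classification plus the two heartless families.
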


The two 2-extremal $3$-graphs  $H_1(n)$ and $H_2(n)$ appearing in Theorem \ref{nti} are defined
later in this section. For the definitions of $H^k_1(n)$ for arbitrary $k\ge4$ see \cite{HM}.

Recently, the third order Tur\'an number for $M^k_2$ has been established for arbitrary $k$ by Han
and Kohayakawa in \cite{HK}.
\begin{theorem}[\cite{HK}]\label{ntntif}
    For $k\ge 3$ and $n\ge 2k+1$,  we have
    $$\mathrm{ex}_k^{(3)}(n;M^k_2)={n-1\choose k-1}-{n-k-1\choose k-1}-{n-k-2\choose k-2}+2.$$
\end{theorem}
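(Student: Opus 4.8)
The plan is to convert Theorem~\ref{ntntif} into an extremal problem about the \emph{diversity} of intersecting families and then resolve that problem by a case analysis on the covering number. For a $k$-graph $H$ write $\gamma(H)=\min_{v\in V(H)}|\{e\in E(H):v\notin e\}|$ for its diversity. First I would record, for $k\ge4$, a clean dictionary between the hierarchy and diversity. By Theorems~\ref{erdos} and~\ref{nti}, for $k\ge4$ and $n\ge2k+1$ we have $\Ex^{(1)}_k(n;M_2^k)=\{S^k_n\}$ and $\Ex^{(2)}_k(n;M_2^k)=\{H^k_1(n)\}$. Now $H$ is contained in a copy of $S^k_n$ precisely when $\gamma(H)=0$, and --- using that in an intersecting family every edge missing a fixed vertex $x$ automatically meets all edges through $x$ --- one checks that $H$ is contained in a copy of $H^k_1(n)$ precisely when some vertex lies in all but exactly one edge. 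Together these say that $H$ lies in a member of $\Ex^{(1)}_k\cup\Ex^{(2)}_k$ iff $\gamma(H)\le1$. Hence, for $k\ge4$, a $k$-graph is admissible for the third order iff it is intersecting with $\gamma(H)\ge2$, so that $\ex^{(3)}_k(n;M_2^k)=\max\{|E(H)|:H\text{ intersecting},\ \gamma(H)\ge2\}$. (For $k=3$ there is the extra family $H_2(n)$; see the last paragraph.)

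For the lower bound I would exhibit a family of diversity $2$ meeting the claimed value. Fix a vertex $x$, a $(k-1)$-set $Y\not\ni x$, and two further vertices $a,b$, and put $f_1=Y\cup\{a\}$, $f_2=Y\cup\{b\}$. Let
\[
H^\ast=\{f_1,f_2\}\cup\{e:\ x\in e,\ e\cap f_1\neq\emptyset,\ e\cap f_2\neq\emptyset\}.
\]
This is intersecting (all edges but $f_1,f_2$ contain $x$, every such edge meets both $f_1$ and $f_2$, and $f_1\cap f_2=Y$), and a short check gives $\gamma(H^\ast)=2$, the minimum being attained at $x$. Counting the edges through $x$ by inclusion--exclusion on the conditions $e\cap f_1\neq\emptyset$ and $e\cap f_2\neq\emptyset$, and using $|f_1\cup f_2|=k+1$, yields
\[
|E(H^\ast)|=\binom{n-1}{k-1}-2\binom{n-k-1}{k-1}+\binom{n-k-2}{k-1}+2=V,
\]
where $V$ denotes the right-hand side of the theorem and the last equality is Pascal's rule. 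Thus $\ex^{(3)}_k(n;M_2^k)\ge V$.

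The heart is the matching upper bound; let $H$ be intersecting with $\gamma(H)\ge2$ and split on its covering number $\tau(H)$. In the principal case $\tau(H)=2$ fix a cover $\{a,b\}$; every edge then contains $a$ or $b$, so $E(H)=H_a\sqcup H_b\sqcup H_{ab}$, where $H_a$ (resp.\ $H_b$) is the set of edges containing $a$ but not $b$ (resp.\ $b$ but not $a$) and $H_{ab}$ those containing both. Intersection forces the cores $\{e\setminus\{a\}:e\in H_a\}$ and $\{e\setminus\{b\}:e\in H_b\}$ to be cross-intersecting $(k-1)$-families on the other $n-2$ vertices, while trivially $|H_{ab}|\le\binom{n-2}{k-2}$. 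Crucially, the diversity hypothesis applied at $a$ and at $b$ (note $H_b$ is exactly the set of edges avoiding $a$, and $H_a$ those avoiding $b$) forces $|H_a|,|H_b|\ge2$, so both cores have size at least $2$. Everything thus reduces to the cross-intersecting lemma: \emph{for $k\ge4$ and $n\ge2k+1$, any cross-intersecting $\mathcal S,\mathcal S'\subseteq\binom{[n-2]}{k-1}$ with $|\mathcal S|,|\mathcal S'|\ge2$ satisfy $|\mathcal S|+|\mathcal S'|\le V-\binom{n-2}{k-2}$}, which I would prove by compression (shifting). Adding the bound on $|H_{ab}|$ gives $|E(H)|\le V$, and equality forces one of the two extremal shapes of the lemma --- a ``book'' of two $(k-1)$-sets paired with all $(k-1)$-sets meeting both, or two stars at a common vertex --- which correspond exactly to $H^\ast$ and to the double star, confirming tightness.

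The main obstacle is the remaining case $\tau(H)\ge3$, which harbours no extremal configuration yet must still be shown to satisfy $|E(H)|<V$ for \emph{every} $n\ge2k+1$. Here the family is genuinely spread out, and the task is the Frankl-type phenomenon that large covering number forces few edges: fixing an edge $e_0$, every edge meets $e_0$, and since no two vertices cover $H$ the links at the vertices of $e_0$ are heavily constrained; combining this with a maximum-degree vertex one aims to push $|E(H)|$ below $V$. Carrying this estimate out sharply and uniformly in $n$ is the delicate step, and is where a structural analysis of intersecting families of covering number $\ge3$ must be supplied. Finally, $k=3$ needs separate care: there $\Ex^{(2)}_3=\{H_1(n),H_2(n)\}$, and the double star $H_2(n)$ has diversity $2$ while already attaining the Hilton--Milner value, so the dictionary of the first paragraph fails and $H_2(n)$ must be excluded by hand --- exactly the point served by the triangle-free reduction advertised in the abstract. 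One then checks directly that $H^\ast\not\subseteq H_2(n)$, so the extremal value remains $V$.
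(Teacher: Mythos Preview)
The paper does not prove Theorem~\ref{ntntif} for general $k$; it is quoted from \cite{HK}. What the paper actually proves is only the case $k=3$ (where the formula becomes $2n-2$), obtained as item~(iii) of Theorem~\ref{main}. That proof is entirely structural and $3$-uniform: by Lemma~\ref{mc} any maximal intersecting $3$-graph not contained in a star or in $K_5\cup(n-5)K_1$ must contain a triangle $C_3$; one then fixes such a triangle, splits according to whether its vertex set is a heart of $H$ (Fact~\ref{maxheart}), and in the heart case reads off $H[U]$ from the finite list of Proposition~\ref{6}. There is no diversity parameter, no cross-intersecting lemma, and no case split on $\tau$.

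Your route is genuinely different and more ambitious, aiming at all $k\ge 3$, but as written it is a plan with the two hardest steps missing. First, the case $\tau(H)\ge 3$ is left entirely open: you correctly call it ``the main obstacle'', and indeed bounding such families below $V$ uniformly in $n\ge 2k+1$ is exactly the substantive content of the Han--Kohayakawa argument; for $k=3$ alone it must absorb the seven sporadic families $H_7,\ldots,H_{11},F_{10},F_7$ that the paper handles one by one. Second, the cross-intersecting inequality with the two-sided constraint $|\mathcal S|,|\mathcal S'|\ge 2$ is asserted ``by compression'' but not proved; moreover your claimed equality characterisation is not quite right --- the ``two stars at a common vertex'' configuration rebuilds the family $\up(\{a,c\})\cup\up(\{b,c\})\cup\up(\{a,b\})$, which attains $V$ only when $k\in\{3,4\}$ (e.g.\ for $k=5$, $n=11$ it has $196$ edges while $V=203$), so for $k\ge 5$ it cannot be a tight case of your lemma. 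Finally, for $k=3$ you explicitly defer to ``the triangle-free reduction advertised in the abstract'', which is precisely the paper's own argument; so for the one case the paper does prove, your proposal supplies no independent proof.
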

Han and Kohayakawa have also determined the 3-extremal $k$-graphs which are not shown here. Below
we define $H_3(n)$, the only 3-extremal 3-graph for $M_2^3$ and $n\ge7$.

A natural question arises if this process terminates. In other words, is the number of maximal
intersecting $k$-graphs  finite, that is, independent of $n$, the number of vertices? This question
has been answered positively already in \cite{PLWomen} (see also \cite{Tuza}) but no extremal
hypergraphs were given.

In this paper we produce explicitly the entire spectrum of maximal,
intersecting 3-graphs and arrange them by means of the ordered Tur\'an numbers for the matching
$M_2^3$ (Proposition \ref{6} and Theorem \ref{main} below). In particular, we find all, pairwise non-isomorphic, maximal, intersecting 3-graphs on six vertices (Proposition \ref{6}). Although, for $n\ge7$, the Tur\'an numbers  of the
first, second, and third order are already known (and are stated above), for the sake of
unification, we include them into our main result. In addition, we determine the complete  Tur\'an
hierarchy for non-intersecting 3-graphs which are triangle-free (Corollary \ref{MC} in Section
\ref{CM}).

Before stating our results, we need to define several specific 3-graphs which  turn out to be
extremal. In our description, we will put an emphasis on the vertex covers.
A subset of vertices $T$ of a hypergraph $H$ is called \emph{a vertex cover} if it has nonempty intersection with every edge of $H$. We denote by $\tau(H)$ the size of the smallest vertex cover of $H$.
 Clearly, every edge of an intersecting hypergraph is its vertex cover, so, for an intersecting $k$-graph $H$ we have $1\le\tau(H)\le k$.

 For a  subset
$A\subset V$, set $\up(A)=\left\{f\in {V\choose k}: f\supset A\right\}$.
Observe that if $T$ is a vertex cover of a \emph{maximal} intersecting $k$-graph $H$, then $\up(T)\subseteq H$.

\medskip

 Let $x,y,z,v,w,u\in V$ be six different vertices of $V$, $|V|=n$. We define
$$
H_1(n)=\up(\{x,y\})\cup \up(\{x,z\})\cup\up(\{x,v\})\cup\left\{\{y,z,v\}\right\}
$$
and
$$
H_2(n)=\up(\{x,y\})\cup \up(\{x,z\})\cup\up(\{y,z\}).
$$
Note that for  $i=1,2$, $M_2\not\subset H_i(n)$ and $|H_i(n)|=3n-8$. Next, let
$$
H_3(n)=\up(\{x,y\})\cup \up(\{x,z\})\cup\{\{x,v,w\},\{y,z,w\},\{y,z,v\}\}.
$$
Note that $M_2\not\subset H_3(n)$ and $|H_3(n)|=2n-2$. Further, let
$$
H_4(n)=\up(\{x,y\})\cup \{\{x,v,z\},\{x,w,z\},\{x,v,w\},\{y,z,w\},\{y,z,v\},\{y,v,w\}\},
$$
$$
H_5(n)=\up(\{x,y\})\cup \{\{x,v,z\},\{x,w,u\},\{x,v,w\},\{y,z,w\},\{y,u,v\},\{y,v,w\}\},
$$
and
$$
H_6(n)=\up(\{x,y\})\cup \{\{x,v,z\},\{x,w,u\},\{x,v,w\},\{y,z,w\},\{y,u,v\},\{x,z,u\}\}.
$$
Note that for  $i=4,5,6$, $M_2\not\subset H_i(n)$ and $|H_i(n)|=n+4$.

Observe also that $\tau(H_i(n))=2$ for $i=1,\dots,6$. The minimal vertex covers can be easily identified, as they are exactly the 2-element sets which are the arguments of the operator $\up(\cdot)$ appearing in all 6 definitions above. For instance, $H_1(n)$ has three minimal vertex covers, $\{x,y\}, \{x,z\},\{x,v\}$, while $H_6(n)$ has just one, $\{x,y\}$.

\medskip

Next, we define five more intersecting 3-graphs, $H_i(n)$, $i=7,\dots,11$, all with just 10 edges and spanned on 6 vertices. As now $\tau(H_i(n))=3$, the remaining $n-6$ vertices are isolated.
Below we use notation $H\cup sK_1$ to
designate the 3-graph obtained from a 3-graph $H$ by adding $s$ isolated vertices.
We find it convenient and in line with the forthcoming proof to base their description on
the notion of a triangle whose copy they all contain. Let $ U=\{x_1,x_2,x_3,y_1,y_2,y_3\} $. We call the cycle
$$
C_3=\{\{x_i,y_j,x_k\}:\{i,j,k\}=\{1,2,3\}\}
$$
\emph{a triangle}. Further, let
$$ A_1=\{\{x_i,y_i,y_j\}:\{i,j\}\subset \{1,2,3\}\}, $$
$$
A_2=\{\{x_i,x_j,y_j\}:\{i,j\}\subset \{1,2,3\}\}, $$
$$ A_3=(A_1\setminus \{x_1,y_1,y_2\})\cup
\{x_2,x_3,y_3\}, $$ and
$$
A_4=(A_1\setminus \{\{x_1,y_1,y_2\},\{x_1,y_1,y_3\}\})\cup\{ \{x_2,x_3,y_3\},\{x_2,x_3,y_2\}\}.
$$
Note, that $A_4=(A_3\setminus \{x_1,y_1,y_3\})\cup \{x_2,x_3,y_2\}$.

\bigskip

We define the following 3-graphs on $U$:
$$ H_7(6)=C_3\cup \{\{x_1,x_2,x_3\}\}\cup A_1, $$ and,
for $i=8,9,10,11$,
$$ H_i(6)=C_3\cup\{ \{y_1,y_2,y_3\}\}\cup A_{i-7}. $$
Finally, for $i=7,\dots,11.$ and $n\ge 7$, set
$$H_i(n)=H_i(6)\cup(n-6)K_1.$$

Observe that the  3-graphs $S_n$ and $H_i(n)$, $i=1,\dots,11$, are all \emph{maximal} with respect to
being intersecting, that is, adding a new edge always results in the appearance of a copy of $M_2$.
Consequently, they are mutually not sub-3-graphs of each other.

\bigskip

For $k=3$ we  suppress the superscript $^3$ from the notation of $3$-graphs, i.e., $M_2^3=M_2$ and
$S_n^3=S_n$. We also suppress the subscript $_3$ in all Tur\'an related notation. It is already
known (see Theorem \ref{erdos}) that for each $n\ge6$,  the  full  star $S_n$  is a
$1\textrm{-extremal}$ intersecting 3-graph for $M_2$. We are now ready to identify all Tur\'an
numbers $\mathrm{ex}^{(s)}(n;M_2)$ together with the sets of $s$-extremal 3-graphs
$\Ex^{(s)}(n;M_2)$, $s\ge1$. Let us fix the vertex set $V$, $|V|=n$. For $n\le5$ every 3-graph is
intersecting and thus $\ex^{(1)}(n;M_2)=\binom n3$, the only 1-extremal 3-graph is the clique
$K_n$, and the higher order Tur\'an numbers $\ex^{(s)}(n;M_2)$, $s\ge2$, do not exist.

If $n=6$, each triple in $\binom V3$ intersects all
 other triples except its complement. Therefore, we may arrange all 20 triples into 10 pairs (an edge
 and
 its complement) and from each such a pair choose arbitrarily one triple to get a maximal intersecting $3$-graph,
consisting of 10 edges. This yields $2^{10}$ 3-graphs, among which we found 13 pairwise
non-isomorphic ones, as specified in Proposition \ref{6} below.

\begin{prop}\label{6}
 We have $\ex^{(1)}(6;M_2)=10$ and
 $$\Ex^{(1)}(6;M_2)=\{S_6, K_5\cup K_1,\;H_i(6),\;i=1,\dots,11\},$$
 where the $H_i(6)$'s are defined above.
\end{prop}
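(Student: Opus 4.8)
The plan is to confirm the value $\ex^{(1)}(6;M_2)=10$ by the counting argument already sketched in the text (the $20$ triples split into $10$ complementary pairs, and a $3$-graph on six vertices is intersecting iff it contains no pair of complementary triples, so a maximal intersecting $3$-graph is obtained precisely by choosing one triple from each pair — giving $2^{10}$ labelled systems, each with exactly $10$ edges). The substance is the classification up to isomorphism, so I would reformulate the problem group-theoretically: the $2^{10}$ selections are acted upon by $S_6$, and $S_6$ acts on the $10$ complementary pairs through the outer action on the $\binom{6}{3}/2$ "synthemes"-like objects; concretely each pair corresponds to a partition of the $6$-set into two triples, and isomorphism classes of maximal intersecting $3$-graphs are exactly the orbits of this $S_6$-action on $\{0,1\}^{10}$. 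So the task reduces to enumerating these orbits and matching them with the listed $13$ systems.

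The key steps, in order. First, I would set up the combinatorial model: label the $10$ unordered partitions $P_1,\dots,P_{10}$ of $V$ into two triples, and identify a maximal intersecting $3$-graph with the function $\sigma:\{1,\dots,10\}\to\{$ one of the two triples in $P_j\}$; two such are isomorphic iff related by an element of $S_6$. Second, I would record the structure of the action — the $10$ partitions carry the structure of the Petersen graph (two partitions are "adjacent" when the triples can be chosen to share two vertices, i.e.\ differ in one swapped pair), and $S_6$ acts as its full automorphism group; this is the clean way to see why only finitely many orbits arise and to organize the case analysis. Third, I would invoke $\tau$ as the first isomorphism invariant, splitting the $13$ systems by $\tau\in\{1,2,3\}$: $\tau=1$ forces the star $S_6$ (one orbit); $\tau=3$ gives the five systems $H_7(6),\dots,H_{11}(6)$, each containing a triangle $C_3$ and no covering pair; $\tau=2$ gives the seven systems $K_5\cup K_1$ and $H_1(6),\dots,H_6(6)$, classified by the family of minimal $2$-covers (a sub-configuration of the Petersen graph's vertex-pairs) — e.g.\ $K_5\cup K_1$ has all $\binom{5}{2}=10$ pairs inside a $5$-set as covers, $H_1(6)$ has the three covers through a common vertex, $H_6(6)$ has a unique cover, etc. Fourth, within each $\tau$-class I would run the orbit count: for $\tau=2$ one checks which "patterns" of up-sets $\up(\{a,b\})$ can coexist intersectingly and how the remaining free triples must be chosen to keep $\tau=2$ and the system maximal; for $\tau=3$ one fixes the (essentially unique up to $S_6$) copy of $C_3$ and analyzes, as the abstract says, how the remaining seven edges meet it, which forces exactly the five listed completions. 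Finally, I would verify the count is exhaustive — e.g.\ by a Burnside/orbit-counting check that the number of $S_6$-orbits on $\{0,1\}^{10}$ that yield genuinely distinct $3$-graphs equals $13$ — and that no two of the listed systems are isomorphic, using the invariants (value of $\tau$, number of minimal covers, presence of $C_3$, degree sequence) to separate them.

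The main obstacle I expect is the $\tau=3$ analysis: showing that every maximal intersecting $3$-graph on six vertices with no $2$-element cover is isomorphic to one of $H_7(6),\dots,H_{11}(6)$, and that these five are pairwise non-isomorphic. The difficulty is that here the "up-set" description is unavailable, so one genuinely has to argue combinatorially on the $10$ chosen triples: first establish that such a system must contain a triangle $C_3$ (this is where one uses that $\tau\ge3$ together with intersecting forces a $3$-sunflower-free covering structure, equivalently a triangle), then fix $C_3$ on $U=\{x_1,x_2,x_3,y_1,y_2,y_3\}$ and determine, edge by edge, which of the remaining triples of $U$ may be added. A careful bookkeeping shows the complement of the triangle (the triples meeting $C_3$ in the "wrong" way) splits into the two candidate central triples $\{x_1,x_2,x_3\}$ and $\{y_1,y_2,y_3\}$ plus the sets $A_1,\dots,A_4$, and exactly the five combinations $C_3\cup\{x_1x_2x_3\}\cup A_1$ and $C_3\cup\{y_1y_2y_3\}\cup A_{i-7}$, $i=8,\dots,11$, are maximal intersecting; separating $H_8,\dots,H_{11}$ then comes down to distinguishing $A_1,A_2,A_3,A_4$ up to the stabilizer of the triangle (a group of order $12$, the automorphisms of $C_3$ as a $3$-graph), which is a finite but slightly delicate check. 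Throughout, to keep the write-up short, I would lean on the Petersen-graph symmetry to reduce the number of cases and defer the lengthy but routine verifications that each listed $H_i(6)$ is indeed intersecting, maximal, and has the claimed $\tau$ and cover set.
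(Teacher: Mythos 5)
Your skeleton (the complementary-pair model giving $2^{10}$ labelled systems of $10$ triples each, then classification by covers, then fixing a triangle) is close in spirit to the paper's, but two of the structural claims on which your classification pivots are false, and they sit exactly where $K_5\cup K_1$ has to be accounted for. First, $\tau(K_5\cup K_1)=3$, not $2$: no pair $\{a,b\}$ inside the $5$-set covers $K_5$, because the triple complementary to $\{a,b\}$ \emph{within the $5$-set} misses it; so your statement that $K_5\cup K_1$ has all $\binom 52$ pairs as covers is wrong, and this $3$-graph belongs to your $\tau=3$ class. But there your proposed first step --- that $\tau\ge3$ together with intersecting and maximal forces a copy of $C_3$ --- is then false, since $C_3$ needs six distinct vertices while $K_5$ spans only five. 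As written, your $\tau=3$ analysis produces only $H_7(6),\dots,H_{11}(6)$ and loses $K_5\cup K_1$, while your $\tau=2$ analysis would try to realize it as a union of up-sets $\up(\{a,b\})$, which it is not. The paper sidesteps this by splitting first on whether $H$ contains $C_3$ (the $C_3$-free maximal intersecting $3$-graphs on six vertices are exactly $S_6$ and $K_5\cup K_1$, by the Cs\'ak\'any--Kahn theorem quoted as Theorem \ref{kahn}), and only then classifying the triangle-containing systems by their number of $2$-covers, using the key simplification that any $2$-cover of $H\supseteq C_3$ must be one of the six $2$-covers of the triangle, no two of which may be disjoint by Fact \ref{intcov}.

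The Petersen-graph organizing principle is also unavailable. Any two distinct partitions of the $6$-set into complementary triples meet in the same intersection pattern up to choosing representatives (one representative pair meets in two vertices, another in one), so the $S_6$-action on the $10$ partitions is $2$-transitive and the only invariant graphs on them are the empty and complete graphs; moreover the Petersen graph has automorphism group $S_5$, of order $120$, which cannot carry a faithful $S_6$-action. So the symmetry reduction and the Burnside count you propose to lean on do not exist, and you would be thrown back on the direct case analysis anyway --- which is essentially what the paper does, including conceding a ``tedious case by case analysis'' of $2^7$ choices in the no-$2$-cover case.
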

As  every intersecting 3-graph on 6 vertices is a sub-3-graph of one of the above 13 extremal
3-graphs, there are no higher order Tur\'an numbers for $n=6$.

Things change dramatically for $n\ge7$. First notice that maximal intersecting $3\textrm{-graphs}$ on $n\ge7$ vertices can be
obtained  from any of the 13 6-vertex  3-graphs appearing in Proposition \ref{6} by adding  all triples containing any of
their vertex covers. This way we obtain  3-graphs $S_n$, $K_5\cup (n-5)K_1$, and $H_i(n)$, $i=1,\dots,11.$ As it turns out there are only two
other maximal intersecting 3-graphs for $n\ge7$.

Let $F_7$ be the Fano plane, that is a 3-graph on 7 vertices obtained from the triangle $C_3$ by adding
one new vertex $z$ and four new edges: $\{x_i,z,y_i\}$, $i=1,2,3$, and $\{y_1,y_2,y_3\}$. Further,
let $F_{10}$ be a 3-graph obtained from the triangle $C_3$ by adding one more vertex $z$ and 7 new edges:
$\{x_1,x_2,x_3\},\{x_1,x_2,z\},\{x_1,z,x_3\},\{z,x_2,x_3\}$, and $\{x_i,y_i,z\}$, $i=1,2,3$.


\begin{theorem}\label{main}
    For  $n\ge 7$,

    \begin{enumerate}
\item $\mathrm{ex}^{(1)}(n;M_2)={n-1\choose 2}$  and  $\Ex^{(1)}(n;M_2)=\{S_n\}$,

\item $\mathrm{ex}^{(2)}(n;M_2)=3n-8$  and  $\Ex^{(2)}(n;M_2)=\{H_1(n),H_2(n)\}$,

\item $\mathrm{ex}^{(3)}(n;M_2)=2n-2$  and   $\Ex^{(3)}(n;M_2)=\{H_3(n)\}$

\item $\mathrm{ex}^{(4)}(n;M_2)=n+4$  and  $\Ex^{(4)}(n;M_2)=\{H_4(n), H_5(n), H_6(n)\}$

\item  $\mathrm{ex}^{(5)}(n;M_2)=10$  and  $\Ex^{(5)}(n;M_2)=\{K_5\cup (n-5)K_1, F_{10}\cup (n-7)K_1,\\
H_i(n): i=7,\dots, 11\}$,

\item $\mathrm{ex}^{(6)}(n;M_2)=7$  and   $\Ex^{(6)}(n;M_2)=\{F_7\cup (n-7)K_1\}$.
 \end{enumerate}
The Tur\'an numbers $\mathrm{ex}^{(s)}(n;M_2)$ do not exist for $s\ge7$.

\end{theorem}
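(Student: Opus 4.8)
The plan is to set up a case analysis driven by $\tau(H)$, the minimum vertex cover size of a maximal intersecting $3$-graph $H$ on $n\ge 7$ vertices, exploiting that $\tau(H)\in\{1,2,3\}$ and that maximality forces $\up(T)\subseteq H$ for every minimum vertex cover $T$. The case $\tau(H)=1$ is exactly the full star $S_n$, giving (i) via Theorem~\ref{erdos}. For $\tau(H)=2$, I would look at the ``cover graph'' whose vertices are $V(H)$ and whose edges are the minimum vertex covers (all of size $2$); since every pair of minimum covers must intersect (two disjoint covers would, together with maximality, produce an $M_2$), this cover graph is itself an intersecting graph, hence either a star or a triangle. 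Pushing each possibility through, together with the requirement that $H$ be intersecting and maximal, should reduce the $\tau=2$ world to the $3$-graphs $H_1,\dots,H_6$; counting edges ($3n-8$, then $2n-2$, then $n+4$) and checking the containment relations among $\Ex^{(1)},\dots,\Ex^{(s-1)}$ at each stage yields (ii)--(iv). The $\tau(H)=3$ case is where the triangle enters: here no vertex meets all edges, so a short argument (the same one behind the triangle-free claim in the abstract, i.e.\ that a triangle-free non-star intersecting triple system has at most $\max\{10,n\}$ edges, combined with $\tau=3$ forcing no isolated-structure escape) shows $H$ must contain a copy of $C_3$; then one analyzes how every other edge of $H$ meets the six vertices $x_1,x_2,x_3,y_1,y_2,y_3$ of that triangle.

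For the $\tau=3$ analysis I would fix the triangle $C_3$ on $U=\{x_1,x_2,x_3,y_1,y_2,y_3\}$ and classify the remaining edges by their trace on $U$. Because $\tau(H)=3$ every edge must hit all three of the $C_3$-edges, which is a strong local constraint; a finite check shows each additional edge either lies inside $U$ (and then is one of a short explicit list: $\{x_1,x_2,x_3\}$, $\{y_1,y_2,y_3\}$, or an edge of one of the $A_j$) or uses exactly one outside vertex in a way that (after identifying the outside vertex across edges, forced by intersection) rebuilds the Fano-type structures $F_7$ or $F_{10}$. The bookkeeping then splits into: the $10$-edge maximal systems on $U$ alone, namely $H_7(6),\dots,H_{11}(6)$ and $K_5\cup K_1$ (here one must double-check $K_5$ really does contain a $C_3$, which it does), plus $F_{10}$ with $10$ edges on $7$ vertices; and the genuinely sparser $F_7$ with $7$ edges. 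Edge counts then give $\ex^{(5)}=10$ and $\ex^{(6)}=7$, and one verifies these $3$-graphs are pairwise non-isomorphic and none embeds into an extremal graph of an earlier generation (for which the explicit minimum-vertex-cover data listed after the definitions is exactly what is needed).

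Finally, termination: once every maximal intersecting $3$-graph on $n\ge 7$ vertices has been shown to be isomorphic to one of $S_n$, $K_5\cup(n-5)K_1$, $F_{10}\cup(n-7)K_1$, $F_7\cup(n-7)K_1$, or $H_i(n)$ for $i=1,\dots,11$ — that is, exactly $15$ systems — we have enumerated all intersecting $3$-graphs up to sub-$3$-graph inclusion, so the hierarchy exhausts itself after generation $6$ and $\ex^{(s)}(n;M_2)$ does not exist for $s\ge 7$. Distributing the $15$ maximal systems into generations $(1),\dots,(6)$ according to their sizes $\binom{n-1}{2}>3n-8>2n-2>n+4>10>7$ (valid for $n\ge 7$) and the inclusion-minimality condition in the definition of the ordered Tur\'an number gives precisely the six lists displayed in the theorem.

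I expect the main obstacle to be the $\tau(H)=3$ case: establishing that such an $H$ must contain a triangle, and then the exhaustive-but-bounded classification of how the other edges meet the triangle's six vertices without either leaving out a configuration or admitting a spurious one. The $\tau\le 2$ cases are guided tightly by the cover-graph structure and the already-known Theorems~\ref{nti} and~\ref{ntntif}, so the real combinatorial work — and the source of the ``hierarchy'' beyond the third order — is the finite case check around the triangle.
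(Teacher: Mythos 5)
Your overall architecture (split on $\tau(H)\in\{1,2,3\}$, use the intersecting family of $2$-covers, reduce the hard case to a finite check around a triangle) is close in spirit to the paper, which instead splits on triangle-free versus triangle-containing and then on whether the chosen triangle is a \emph{heart} of $H$, reducing the heart case to the $6$-vertex classification of Proposition \ref{6}. But there is one step in your plan that is simply false and would derail the $\tau=3$ branch as written: $K_5$ does \emph{not} contain a copy of $C_3$. The triangle $C_3$ of this paper is a $3$-uniform loose cycle spanning six distinct vertices $x_1,x_2,x_3,y_1,y_2,y_3$, whereas $K_5\cup(n-5)K_1$ has only five non-isolated vertices; indeed Lemma \ref{mc} explicitly lists $K_5\cup(n-5)K_1$ as one of the two containers for \emph{triangle-free} non-star intersecting systems. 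Consequently your claim that $\tau(H)=3$ forces $C_3\subseteq H$ is wrong ($K_5\cup(n-5)K_1$ is a maximal intersecting system with $\tau=3$ and no $C_3$), and your ``trace on $U$'' analysis cannot capture it. The repair is the one the paper makes at the very start of its proof: dispose of the triangle-free case first via Lemma \ref{mc} (which hands you exactly $S_n$ and $K_5\cup(n-5)K_1$, since $H_0(n)$ is not maximal), and only then assume $C_3\subseteq H$.

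Beyond that, the two genuinely hard finite checks are left at the level of ``should reduce to'': the single-$2$-cover case (which must produce exactly $H_4(n),H_5(n),H_6(n)$ and is the longest case of the paper's Proposition \ref{6}), and the classification of how edges meet the triangle when some pair of edges of $H$ fails to intersect inside $U$ (the paper's ``no triangular heart'' case, which is what isolates $F_7$ and $F_{10}$). Your plan does not articulate the heart dichotomy, which is the device that lets the paper quote Fact \ref{maxheart} and collapse the triangular-heart case to the already-proved $6$-vertex enumeration rather than redoing a trace analysis with outside vertices present. Your final bookkeeping paragraph (distributing the $15$ maximal systems into generations by edge count $\binom{n-1}{2}>3n-8>2n-2>n+4>10>7$ and concluding non-existence for $s\ge7$) is correct and matches the paper.
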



\section{Proofs }\label{proof}

In this section we present all our proofs. We begin with some general simple observations about the structure of maximal intersecting hypergraphs. In Subsection \ref{CM} we determine all Tur\'an numbers $\ex^{(s)}(n;\{M_2,C_3\})$, $n\ge6$, $s\ge1$, and accompanying them $s$-extremal 3-graphs for the pair $\{M_2,C_3\}$ (see Corollary \ref{MC}). The remaining two subsections contain the proofs of Proposition \ref{6} and Theorem \ref{main}, respectively.

\subsection{The structure of maximal intersecting hypergraphs}\label{struc}

 Recall that a subset of vertices $T$ of a hypergraph $H$ is called \emph{a vertex cover} if it has nonempty intersection with every edge of $H$ and that $\tau(H)$ stands for the size of the smallest vertex cover of $H$.
For $k\ge2$ and $n\ge2k$, let $H$ be an $n$-vertex, maximal intersecting $k$-graph. Clearly, every edge of $H$ is its vertex cover, so $1\le\tau(H)\le k$. We have already mentioned that if $T$ is a vertex cover of $H$ then, by maximality, $\up(T)\subseteq H$. As an immediate consequence, we deduce the following useful observation.

\begin{fact}\label{intcov}
For $k\ge2$ and $n\ge2k$, let $H$ be an $n$-vertex, maximal intersecting $k$-graph. Then the family of all vertex covers of $H$ is intersecting itself.
\end{fact}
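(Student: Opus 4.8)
The plan is to show that any two vertex covers $T_1, T_2$ of a maximal intersecting $k$-graph $H$ must share a vertex. Suppose, for contradiction, that $T_1 \cap T_2 = \emptyset$. The idea is to manufacture two disjoint edges of $H$, contradicting the assumption that $H$ is intersecting. Since $|T_i| \le k$ and $n \ge 2k$, there is enough room on the vertex set to extend each $T_i$ to a $k$-set disjoint from the other.

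More precisely, first I would pick a $k$-set $f_1 \supseteq T_1$ with $f_1 \cap T_2 = \emptyset$: this is possible because $|T_1 \cup T_2| \le 2k \le n$, so after placing $T_1$ we may choose the remaining $k - |T_1|$ vertices of $f_1$ from $V(H) \setminus (T_1 \cup T_2)$, which has size at least $n - 2k \ge 0$; we need only $k - |T_1| \ge k - |T_1|$ extra vertices and there are $n - |T_1 \cup T_2| \ge n - 2k + |T_2| \ge k - |T_1| + (|T_2| - (k - 0))$... more cleanly: $n - |T_1 \cup T_2| \ge 2k - |T_1| - |T_2| \ge k - |T_1|$ since $|T_2| \le k$. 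Then I would pick a $k$-set $f_2 \supseteq T_2$ with $f_2 \subseteq V(H) \setminus f_1$; this is possible because $V(H) \setminus f_1$ contains $T_2$ (as $f_1 \cap T_2 = \emptyset$) and has size $n - k \ge k$, so it accommodates a $k$-set through $T_2$. By the observation recorded just before the statement, since $T_1$ and $T_2$ are vertex covers of the maximal intersecting $k$-graph $H$, we have $\up(T_1) \subseteq H$ and $\up(T_2) \subseteq H$, hence $f_1, f_2 \in H$. But $f_1 \cap f_2 = \emptyset$, so $\{f_1, f_2\}$ is a copy of $M_2^k$ in $H$, contradicting that $H$ is intersecting. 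Therefore $T_1 \cap T_2 \ne \emptyset$, and since $T_1, T_2$ were arbitrary vertex covers, the family of all vertex covers is intersecting.

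The only mildly delicate point — and the one I would write out carefully — is the counting that guarantees both extensions $f_1 \supseteq T_1$ and $f_2 \supseteq T_2$ exist simultaneously with $f_1 \cap f_2 = \emptyset$; this is exactly where the hypothesis $n \ge 2k$ is used, and it is a routine inclusion–exclusion check rather than a real obstacle. Everything else is immediate from the maximality property $\up(T) \subseteq H$ for a vertex cover $T$, which is already established in the excerpt.
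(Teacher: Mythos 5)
Your proof is correct and is essentially the paper's own argument: the paper likewise assumes two disjoint covers $T_1,T_2$, invokes maximality to get $\up(T_i)\subseteq H$, and uses $n\ge 2k$ to select edges $e_i\in\up(T_i)$ with $e_1\cap e_2=\emptyset$, contradicting that $H$ is intersecting. The only difference is that you spell out the counting for choosing $e_1,e_2$ disjointly, which the paper leaves implicit.
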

\proof Suppose $T_1$ and $T_2$ are two disjoint vertex covers of $H$. Then, since $n\ge2k$, there are $e_i\in\up(T_i)\subseteq H$, $i=1,2$, such that $e_i\cap e_2=\emptyset$, a contradiction. \qed

\medskip

Our next observation will be of great help in  the proof of the main theorem in Subsection \ref{pm}.
We call a subset $U\subseteq V(H)$ \emph{a heart of} $H$ if every two edges of $H$ intersect on $U$, that is, if for all $e,f\in H$, we have $e\cap f\cap U\neq\emptyset$. \emph{The induced} sub-$k$-graph $H[U]$ consists of all edges of $H$ which are contained in $U$, that is, $H[U]=\{e\in H:\; e\subset U\}.$
Trivially, for every $U\subseteq V(H)$, $H[U]$ is intersecting as well. It turns out that every reasonably large heart of $H$  is also maximal.

\begin{fact}\label{maxheart}
For $k\ge2$ and $n\ge2k$, let $H$ be an $n$-vertex, maximal intersecting $k$-graph. If $U$ is a heart of $H$, $|U|\ge2k$, then $H[U]$ is maximal intersecting $k$-graph itself.
\end{fact}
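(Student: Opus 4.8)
The plan is to verify the two defining properties of "maximal intersecting $k$-graph" for $H[U]$: that it is intersecting, and that it is maximal, i.e. one cannot add any $k$-subset of $U$ to $H[U]$ while keeping it intersecting. Since $U$ is a heart of $H$, any two edges of $H$ meet inside $U$; in particular any two edges of $H[U]\subseteq H$ meet, so $H[U]$ is intersecting (this was already observed for arbitrary induced subgraphs). The real content is maximality. Suppose $f\in\binom{U}{k}$ and $f\notin H[U]$; I must produce an edge of $H[U]$ disjoint from $f$. Since $f\subseteq U$ and $f\notin H[U]$, we also have $f\notin H$ (an edge of $H$ contained in $U$ would lie in $H[U]$). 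Because $H$ is maximal intersecting, $H\cup\{f\}$ contains a copy of $M_2$, which forces the existence of an edge $e\in H$ with $e\cap f=\emptyset$.

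The obstacle is that this $e$ need not lie in $U$, so it is not yet an edge of $H[U]$. To fix this, first note $e\cap f=\emptyset$ with $f\subseteq U$ already shows $\tau$-type information: $f$ is a $k$-set disjoint from the edge $e$. Now I would use that $e$ itself is a vertex cover of the maximal intersecting $k$-graph $H$, hence $\up(e)\subseteq H$. The idea is to replace $e$ by an edge $e'\in\up(e)$ that also lies inside $U$; then $e'\in H[U]$ and $e'\supseteq e$ is still disjoint from $f$, finishing the proof. To find such $e'$: we need $k-|e\cap U|$ extra vertices from $U\setminus(e\cup f)$. Counting, $|U\setminus f|=|U|-k\ge 2k-k=k$, and within $U\setminus f$ we must fit the (at most $k$) vertices $e\cap U$ together with enough fresh vertices of $U$ to pad $e$ up to size $k$; since $|U\setminus f|\ge k$ and we only ever need a total of $k$ vertices of $U$ inside $e'$, there is just enough room — but I must check the arithmetic carefully, because if $e\cap U$ is large it could clash with $f$. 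Here the heart property enters again: $f\in\binom{U}{k}$, and if $f\notin H$ while $e$ is disjoint from $f$, I should argue $e$ already has at least one vertex in $U$ — indeed every edge of $H$, being a vertex cover, must meet the vertex cover... no, rather: pick any edge $g\in H[U]$ (one exists provided $H[U]$ is nonempty, which holds since $\up(f')\cap\binom{U}{k}\ne\emptyset$ for a suitable cover; alternatively handle $H[U]=\emptyset$ as a trivial degenerate case) — then $e\cap g\ne\emptyset$ and $e\cap g\cap U\ne\emptyset$ by the heart property, so $e\cap U\ne\emptyset$.

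Concretely the steps are: (1) $H[U]$ is intersecting — immediate. (2) Given $f\in\binom{U}{k}\setminus H[U]$, deduce $f\notin H$, hence by maximality of $H$ there is $e\in H$ with $e\cap f=\emptyset$. (3) Using the heart property (against any fixed edge of $H[U]$, or directly), show $e$ meets $U$, and more importantly bound how much of $e$ lies outside $U$. (4) Since $e$ is a vertex cover of $H$, $\up(e)\subseteq H$; choose $e'\in\up(e)$ with $e'\subseteq U$, which is possible because $|U\setminus(e\cup f)|$ is large enough — here I expect to use $|U\setminus f| = |U|-k\ge k$ together with the fact that $e\cap U$ together with the padding vertices all come from the $k$ available slots in $U\setminus f$. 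Then $e'\in H[U]$, $e'\cap f=\emptyset$, contradicting that $H[U]\cup\{f\}$ should still be intersecting. I expect step (4) — the packing/counting argument placing $e'$ entirely inside $U$ and disjoint from $f$ — to be the main technical point, and the inequality $|U|\ge 2k$ to be exactly what makes it work, with the heart hypothesis used to control $e\cap U$ from below so that no extra slack is needed.
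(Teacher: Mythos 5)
Your overall strategy matches the paper's: assume some $f\in\binom{U}{k}\setminus H[U]$ could be added to $H[U]$, use maximality of $H$ to obtain $e\in H$ with $e\cap f=\emptyset$, and then manufacture from $e$ an edge of $H[U]$ disjoint from $f$. But the step that actually produces that edge is broken. For a $k$-set $e$ in a $k$-uniform hypergraph, $\up(e)=\{e\}$, so the assertion ``$e$ is a vertex cover of $H$, hence $\up(e)\subseteq H$'' gives you nothing beyond $e$ itself, and there is no $e'\in\up(e)$ with $e'\subseteq U$ unless $e\subseteq U$ already (a set containing $e$ cannot lie inside $U$ if $e$ does not). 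The set you actually pad, namely $e\cap U$, is not $e$, and you never justify why the padded $k$-set is an edge of $H$. Establishing only that $e\cap U\neq\emptyset$ (your step (3)) is far from enough: $e\cap U$ may be a single vertex, and you need every $k$-set of $U\setminus f$ containing it to be an edge.

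The missing idea --- and the one genuinely nontrivial use of the heart hypothesis --- is that $e\cap U$ is itself a vertex cover of $H$: for every $g\in H$ we have $g\cap e\cap U\neq\emptyset$ because $U$ is a heart, so $e\cap U$ meets every edge. By maximality of $H$ this yields $\up(e\cap U)\subseteq H$. Your counting then finishes: $e\cap U\subseteq U\setminus f$ since $e\cap f=\emptyset$, and $|e\cap U|\le k\le |U|-k=|U\setminus f|$, so $e\cap U$ extends to a $k$-set $e'\subseteq U\setminus f$; then $e'\in\up(e\cap U)\subseteq H$, $e'\subseteq U$, hence $e'\in H[U]$, and $e'\cap f=\emptyset$ --- the desired contradiction. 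This is exactly the paper's argument (its proof writes $e\setminus U$ where it plainly means $e\cap U$). So the skeleton of your proposal is right, but as written step (4) does not go through, and no amount of ``bounding how much of $e$ lies outside $U$'' can replace the observation that the trace $e\cap U$ is a cover whose upset lies in $H$.
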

\proof Suppose not. Then there exists a $k$-element set $T\subset U$ and an edge $e\in H$ such that  $T\cap e=\emptyset$ but  $H[U]\cup\{T\}$ is still intersecting, i.e. $T$ is a vertex cover of $H[U]$. Now, $e\setminus U$ is a vertex-cover of $H$, and so, $\up(e\setminus U)\subseteq H$. In particular, since $|U|\ge2k$, there is an edge $f\in\up(e\setminus U)$ such that $f\subset U$, that is, $\in H[U]$, and $f\cap T=\emptyset$. This is, however, a contradiction with the assumption that $T$ is a vertex cover of $H[U]$. \qed

\subsection{Triangle-free intersecting 3-graphs}\label{CM}

Recall that a triangle $C_3$ consists of a vertex set $U=\{x_1,x_2,x_3, y_1,y_2,y_3\}$ and the edge
set
$$
C_3=\{\{x_i,y_j,x_k\}: \{i,j,k\}=\{1,2,3\}\}.
$$
Thus, the vertices $x_1, x_2, x_3$ are of degree two in $C_3$, while $y_1,y_2,y_3$ are of degree
one.

The Tur\'an numbers for $C_3$ were determined in \cite{FF} for $n\ge75$ and in \cite{CK} for all
$n$.

\begin{theorem}[\cite{CK}]\label{kahn}
For $n\ge 6$, $\ex^{(1)}(n;C)={n-1\choose 2}$. Moreover, for $n\ge 8$, $\Ex^{(1)}(n;C_3)=\{S_n\}$,
for $n=7$, $\Ex^{(1)}(7;C_3)=\{S_7, \up(\{u,v\})\cup \binom{V\setminus\{u,v\}}3\}$, and for $n=6$,
$\Ex^{(1)}(6;C_3)=\{S_6, K_5\cup K_1\}$. \qed
\end{theorem}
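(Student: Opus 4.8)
The plan is to prove Theorem~\ref{kahn} by induction on $n$, with $n\in\{6,7\}$ as the base. The lower bound $\ex^{(1)}(n;C_3)\ge\binom{n-1}{2}$ is witnessed by $S_n$, which is $C_3$-free because $C_3$ is not a star (its three edges have no common vertex, while any three edges of $S_n$ share the centre); so all the work is in the upper bound and in identifying $\Ex^{(1)}(n;C_3)$. For $n\in\{6,7\}$ I would argue directly: since a copy of $C_3$ spans exactly six vertices, "$C_3$-free'' on a $6$-set becomes the statement that, under every relabelling of $V$ as $x_1,x_2,x_3,y_1,y_2,y_3$, at least one of the three triples $\{x_i,y_j,x_k\}$ is missing, and a short case analysis shows that at least ten triples of $\binom V3$ must be absent, with the only equality configurations being $S_6$ and $K_5\cup K_1$. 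The case $n=7$ is then a slightly longer finite check which additionally produces the exceptional extremal graph $\up(\{u,v\})\cup\binom{V\setminus\{u,v\}}3$; its $C_3$-freeness is seen by observing that in any putative copy of $C_3$ using the edge $\{u,v,i\}$, the two remaining edges would be forced to lie inside the $5$-set and to contain $i$, making two of the three pairwise intersections coincide — which $C_3$ does not allow.

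For the inductive step let $n\ge8$ and let $H$ be a $C_3$-free $3$-graph on $n$ vertices; I want $|H|\le\binom{n-1}{2}$ with equality only for $S_n$. Assume for now that $H$ has a vertex $v$ with $\deg_H(v)\le n-2$ (this is the crux; see below). Then $H-v$ is $C_3$-free on $n-1\ge7$ vertices, so by the inductive hypothesis $|H-v|\le\binom{n-2}{2}$ and hence $|H|=|H-v|+\deg_H(v)\le\binom{n-2}{2}+(n-2)=\binom{n-1}{2}$. In case of equality, $\deg_H(v)=n-2$ and $H-v$ is extremal on $n-1$ vertices. If $H-v\cong S_{n-1}$ with centre $c$, I claim every edge of $H$ through $v$ contains $c$: otherwise some $\{v,a,b\}\in H$ with $c\notin\{a,b\}$, and since $|V\setminus\{v,a,b,c\}|\ge4$ we may choose distinct $p,q\in V\setminus\{v,a,b,c\}$; then $\{v,a,b\},\{a,c,p\},\{b,c,q\}\in H$ (the last two lie in $S_{n-1}$), they meet pairwise exactly in $a$, $b$, $c$, and their union is the $6$-set $\{v,a,b,c,p,q\}$, so they form a copy of $C_3$, a contradiction. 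Hence $H$ is $S_{n-1}$ together with $n-2$ triples on $\{v,c\}$, i.e. $H=\up(\{c,v\})\cup S_{n-1}=S_n$. The possibility (which arises only when $n=8$) that $H-v$ is the exceptional $7$-vertex extremal graph is ruled out by a similar but more involved argument: attaching a degree-$6$ vertex $v$ to $\up(\{u,w\})\cup\binom{V'\setminus\{u,w\}}3$ always creates a copy of $C_3$ — first any edge $\{v,a,b\}$ with $\{a,b\}\subseteq V'\setminus\{u,w\}$ combines with two triples inside the $5$-set to give a $C_3$, and then one checks that the remaining admissible edges through $v$ (those meeting $\{u,w\}$) cannot all be packed in without producing a copy of $C_3$ using an edge $\{u,w,i\}$ of the base graph. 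This yields $\ex^{(1)}(n;C_3)=\binom{n-1}{2}$ and $\Ex^{(1)}(n;C_3)=\{S_n\}$ for $n\ge8$.

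The one genuine gap, and the step I expect to be the main obstacle, is the standing assumption that $H$ has a vertex of degree at most $n-2$; equivalently, one must prove that a $C_3$-free $3$-graph on $n\ge6$ vertices cannot have minimum degree $\ge n-1$ (for $n=6$ this is already contained in the base case, since neither $S_6$ nor $K_5\cup K_1$ has minimum degree $5$, and anything denser is not $C_3$-free). The natural attack is local: if every vertex had degree $\ge n-1$, then each link $L_v$ would be a graph on $n-1$ vertices with at least $n-1$ edges — hence containing a cycle — and the average codegree of $H$ would be at least $2$; one then tries to build a copy of $C_3$ from an edge $\{v,a,b\}$ together with two edges $\{a,z,p\}$, $\{b,z,q\}$ avoiding $v$, which amounts to a careful search coordinating the links of $a$, $b$ and a common neighbour $z$. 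Carrying out this search in general — or, as an alternative, importing the homological method of Cs\'ak\'any and Kahn \cite{CK} — is where the real difficulty lies; everything else is the routine deletion-and-reattachment bookkeeping sketched above.
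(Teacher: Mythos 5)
This statement is quoted in the paper from Cs\'ak\'any and Kahn \cite{CK} (note the \verb|\qed| attached to the statement itself); the paper offers no proof of its own, so there is nothing internal to compare your argument against --- it stands or falls on its own.

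As written, your proposal has a genuine gap, and you have correctly located it yourself: the entire induction rests on the claim that every $C_3$-free $3$-graph on $n\ge 8$ vertices has a vertex of degree at most $n-2$, and this claim is nowhere proved. It is not a routine fact. A simple counting argument shows that minimum degree $\ge n-1$ forces $|H|\ge n(n-1)/3>\binom{n-1}{2}$ for $n>6$, so the claim is actually \emph{equivalent in strength} to (a consequence of) the bound you are trying to prove; you cannot invoke the theorem to get it without circularity, and your sketched ``local attack'' (links contain cycles, average codegree at least $2$, then assemble a triangle) is precisely the hard combinatorial core of the Frankl--F\"uredi and Cs\'ak\'any--Kahn proofs, not a finishing touch. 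Until that lemma is established independently, the inductive step never starts. In addition, both base cases are only asserted: the $n=6$ claim that ``at least ten triples must be absent'' and the $n=7$ finite check (which must certify that exactly the two listed configurations achieve $15$ edges among all $C_3$-free $3$-graphs on $7$ vertices) are substantial verifications, not one-line observations, and the $n=8$ branch of the equality analysis (ruling out the exceptional $7$-vertex extremal graph as $H-v$) is likewise left as ``one checks.'' The deletion-and-reattachment bookkeeping and the equality analysis for $H-v\cong S_{n-1}$ are fine; the rest is where the theorem actually lives. Given that the paper treats this as an imported black box, the honest options are either to cite \cite{CK} as the paper does, or to commit to proving the minimum-degree lemma and the two base cases in full.
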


Define  $H_0(n)$ as a 3-graph obtained from a copy of $K_4$ on the set of vertices $\{x,y,z,v\}$,
by adding to it all the edges of the form $\{x,y,w\}$ where $w\notin \{x,y,z,v\}$, namely,
$$
H_0(n)=\up(\{x,y\})\cup\{\{x,z,v\},\{y,z,v\}\}.
$$
Note that $|H_0(n)|=n$, $H_0(n)\subset G_i(n)$ for $i=1,\dots,5$, and $H_0(n)$ is
$\{M_2,C_3\}$-free. The next lemma plays an important role in the proof of Theorem \ref{main}.

\begin{lemma}\label{mc}
    For $n\ge 6$,  if $H$ is an $n$-vertex $\{M_2,C_3\}$-free 3-graph not contained in the star $S_n$,
then $H\subseteq K_5\cup (n-5)K_1$ or $H\subset H_0(n)$.
\end{lemma}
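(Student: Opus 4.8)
The plan is to exploit the fact that $H$ is intersecting and triangle-free, so by Fact~\ref{intcov} (applied to a maximal intersecting $3$-graph containing $H$) the vertex covers of $H$ behave well, and in particular $\tau(H)\in\{1,2,3\}$. Since $H$ is not contained in the star $S_n$, it is not a sub-$3$-graph of the unique $1$-extremal graph, so $H$ is not a star, i.e. $\tau(H)\ge2$. First I would dispose of the case $\tau(H)=2$: if $\{a,b\}$ is a cover, then every edge meets $\{a,b\}$, and since $C_3\not\subseteq H$ one shows the edges avoiding $a$ (all containing $b$) and the edges avoiding $b$ (all containing $a$) cannot be ``spread out'' — concretely, if there were two disjoint pairs $\{b,c_1,c_2\}$ and $\{a,d_1,d_2\}$ with $\{c_1,c_2\}\cap\{d_1,d_2\}=\emptyset$ together with a third suitably placed edge, a triangle would appear; chasing this forces all edges other than those through a single pair to live inside a fixed $4$-set $\{x,y,z,v\}$, which is exactly the structure $H\subseteq H_0(n)$.

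Next I would handle $\tau(H)=3$. Here every edge is a minimum cover, and I claim the whole of $H$ lives on at most $5$ vertices, giving $H\subseteq K_5\cup(n-5)K_1$. The idea: pick an edge $e=\{a,b,c\}$. Every other edge meets $e$; triangle-freeness severely restricts how. If some vertex $w\notin e$ lies in an edge, analyze the ``link'' pattern of $w$ relative to $e$ and show that a second outside vertex $w'$ appearing in another edge, together with $e$, creates a copy of $C_3$ unless $w,w'$ and $e$ overlap in a very tight way; iterating, at most two vertices outside any fixed edge can be ``used,'' and $\tau(H)=3$ plus intersecting forces these to collapse onto a common $5$-set on which $H$ is a sub-$3$-graph of $K_5$. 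One should double-check the small case $n=6$ separately (or note it follows since $K_5\cup K_1$ and $H_0(6)$ already cover it), and also observe $H_0(n)$ is indeed $\{M_2,C_3\}$-free (its only potential triangle would need three degree-two-ish vertices, but $H_0(n)$ has too few edges off the pair $\{x,y\}$).

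The main obstacle I anticipate is the $\tau(H)=2$ analysis: ruling out a triangle appearing is a genuine case check, because with cover $\{a,b\}$ the edges through $b$ (avoiding $a$) induce an intersecting $2$-graph on $V\setminus\{a,b\}$ and likewise for $a$, and one must show that unless these two ``fans'' share a common structure confined to a $4$-set, picking one edge from each side plus one more edge assembles the six-vertex pattern $\{x_i,y_j,x_k\}$. I would organize this by first showing that if two edges $\{a,p,q\}$ and $\{b,r,s\}$ of $H$ are disjoint then $\{p,q\}\cap\{r,s\}$ must be forced to be nonempty (else $M_2$), then using a third edge to pin down that all such edges share two common vertices with $\{x,y,z,v\}$, i.e. $H\subseteq \up(\{x,y\})\cup\{\{x,z,v\},\{y,z,v\}\}=H_0(n)$. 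The $\tau=3$ case is cleaner but still requires patiently enumerating the intersection patterns of an outside vertex with a fixed edge; the payoff is that triangle-freeness forbids the ``spread'' configurations and compresses everything onto $5$ vertices.
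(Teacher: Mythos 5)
There is a genuine gap in your case decomposition: the dichotomy ``$\tau(H)=2\Rightarrow H\subseteq H_0(n)$ and $\tau(H)=3\Rightarrow H\subseteq K_5\cup(n-5)K_1$'' is false, because a $\{M_2,C_3\}$-free $3$-graph with $\tau=2$ need not embed into $H_0(n)$. Concretely, let $H$ consist of all nine triples of $\{1,2,3,4,5\}$ that meet $\{1,2\}$ (with $n-5$ further isolated vertices). This $H$ is intersecting and $C_3$-free (it spans only five vertices while a triangle needs six), it is not contained in any star, and $\tau(H)=2$ with $\{1,2\}$ its unique $2$-cover; yet $H$ has six edges not containing the pair $\{1,2\}$, whereas $H_0(n)$ has only two edges outside $\up(\{x,y\})$, so $H\not\subseteq H_0(n)$. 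The lemma is not threatened ($H\subseteq K_5\cup(n-5)K_1$), but your $\tau=2$ ``chase,'' which you claim forces all edges off a single pair into a fixed $4$-set, must break down on this example: the correct secondary split is by whether $H$ spans more than five vertices, not by $\tau$. A smaller point: applying Fact~\ref{intcov} to a maximal intersecting extension of $H$ is unhelpful here, since that extension may contain triangles; in any case $\tau(H)\le 3$ already follows from every edge being a cover.

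For comparison, the paper avoids $\tau$ altogether. It first locates two edges $e_1,e_2$ meeting in exactly one vertex $x$ (if none exist, all edges pairwise meet in two vertices and $H\subseteq\up(\{x,y\})$ or $H\subseteq K_4\cup(n-4)K_1$, hence $H\subseteq H_0(n)$). If every edge lies in the five-set $U=e_1\cup e_2$, then $H\subseteq K_5\cup(n-5)K_1$. Otherwise, any edge meeting $V\setminus U$ must contain $x$ (else it would complete a triangle with $e_1,e_2$ or miss one of them), and an edge $h\subset U$ avoiding $x$ (which exists since $H\not\subseteq S_n$) yields a second such configuration forcing those edges to contain a second fixed vertex $y$; a short check then pins $H$ inside $H_0(n)$. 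Your $\tau=3$ sketch points in a workable direction but would still need this compression mechanism; I would reorganize the whole argument around the existence of an edge leaving the span of a $P_2$ rather than around the cover number.
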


\begin{proof}
Let $H$ be a $\{M_2,C_3\}$-free
 3-graph $H$ on the set of vertices $V$, $|V|=n$, which is not a star.
 We will show that  $H$ is a sub-3-graph of either $K_5\cup (n-5)K_1$ or $H_0(n)$.

Let $P_2$ denote a 3-graph consisting of two edges sharing exactly one vertex.
  We may assume that $P_2\subset H$, because otherwise, every two edges of $H$ would intersect in exactly
   two vertices and, consequently, $H\subseteq \up(\{x,y\})$, for some two vertices
   $x,y\in V$, or $H\subseteq K_4\cup (n-4)K_1$, implying that
    $H\subseteq H_0(n)$.
   Let us set
 $P_2=\{e_1,e_2\}$, $e_1\cap e_2=\{x\}$, $U=V(P_2)$, and $W=V\setminus U$, $|W|=n-5$.
 If all the edges of $H$ are contained in $U$,
      then $H\subseteq K_5\cup (n-5)K_1$. Therefore, in the rest of the proof we
       will be assuming that there exists an edge $f\in H$ with $f\cap W\neq \emptyset$.

        As $H$ is intersecting, every edge $f\in H$ must, in particular,
         intersect $e_1$ and $e_2$. Therefore, since $C_3\nsubseteq H$, every edge $f\in H$
  with $f\cap W\neq \emptyset$ contains vertex $x$. But $H\nsubseteq S_n$ and hence
  there exists an edge $h\in H$ such that $x\notin h$. As explained above, $h\subset U$.
   Without loss of generality we may assume that $|h\cap e_i|=i$ for $i=1,2$ and let
    $h\cap e_1=\{y\}$, $h\cap e_2=\{z,v\}$. Then the edges $h$ and $e_1$ form another copy of $P_2$
     and using the same argument as above, every edge $f\in H$ with $f\cap W\neq\emptyset$
      must contain vertex $y$. Consequently, all the edges of $H$ satisfying $f\cap W\neq\emptyset$
       are of the form $\{x,y,w\}$, where $w$ is an arbitrary vertex of $W$. One can check that
         adding to $H$ any triple $e\in\binom U3$, except for $\{x,y,z\}$ and $\{x,y,v\}$, creates, together with an edge $\{x,y,w\}$, $w\in W$, either a
          triangle or a pair of disjoint edges. Hence, $H\subseteq H_0(n)$. \end{proof}

An immediate corollary of Theorem \ref{kahn} and Lemma \ref{mc}  gives the Tur\'an numbers for the
pair $\{M_2,C_3\}$. Note that the 3-graph $\up(\{u,v\})\cup \binom{V\setminus\{u,v\}}3$ on 7
vertices contains $M_2$ and therefore is `disqualified' here.

\newpage

\begin{cor}\label{MC} The complete Tur\'an hierarchy for the pair $\{M_2,C_3\}$ is as follows:
\begin{enumerate}
\item For $n\le 5$,
$$\ex^{(1)}(n;\{M_2,C_3\})=\binom n3,\qquad \Ex^{(1)}(n;\{M_2,C_3\})=\{K_n\},$$
 and
 $\ex^{(s)}(n;\{M_2,C_3\})$ does not exist for $s\ge2$.
\item For $n=6$,
$$\ex^{(1)}(6;\{M_2,C_3\})=10,\qquad \Ex^{(1)}(6;\{M_2,C_3\})=\{S_6,K_5\cup K_1\},$$
 $$\ex^{(2)}(6;\{M_2,C_3\})=6, \qquad\Ex^{(2)}(6;\{M_2,C_3\})=\{H_0(6)\},$$ and
 $\ex^{(s)}(6;\{M_2,C_3\})$ does not exist for $s\ge3$.
\item For $n\ge7$,
$ \ex^{(1)}(n;\{M_2,C_3\})={n-1\choose 2}, \qquad \Ex^{(1)}(n;\{M_2,C_3\})=\{S_n\}.
    $
    \item
 For $n=10$, $\Ex^{(2)}(10;\{M_2,C_3\})=\{K_5\cup 5K_1, H_0(10)\}$ and
    $\ex^{(s)}(10;\{M_2,C_3\})$ does not exist for $s\ge3$.
    \item For $n\ge7$, $n\neq10$,
    $$
    \ex^{(2)}(n;\{M_2,C_3\})=\max\{10,n\}, \qquad \ex^{(3)}(n;\{M_2,C_3\})=\min\{10,n\},$$
     $$ \Ex^{(2)}(n;\{M_2,C_3\})\cup \Ex^{(3)}(n;\{M_2,C_3\})=\{K_5\cup (n-5)K_1, H_0(n)\},
    $$
    and $\ex^{(s)}(n;\{M_2,C_3\})$ does not exist for $s\ge4$.

    \end{enumerate}
\end{cor}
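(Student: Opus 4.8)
The statement to prove is Corollary~\ref{MC}, which assembles the complete Tur\'an hierarchy for the pair $\{M_2,C_3\}$ from Theorem~\ref{kahn} and Lemma~\ref{mc}. The plan is to verify each of the six cases by combining the classification of $\{M_2,C_3\}$-free 3-graphs provided by those two results with a bookkeeping argument on edge counts.

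\medskip

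First I would dispose of the small cases $n\le 5$: here every 3-graph is intersecting and contains no $M_2$, and $C_3$ needs $6$ vertices, so every 3-graph on $\le 5$ vertices is $\{M_2,C_3\}$-free; the maximum is $\binom n3$, attained uniquely by $K_n$, and since $K_n$ is the unique extremal 3-graph and every $\{M_2,C_3\}$-free 3-graph embeds into it, no higher-order Tur\'an number exists. For $n=6$ I would invoke Theorem~\ref{kahn} to get $\ex^{(1)}(6;\{M_2,C_3\})=10$ with $\Ex^{(1)}=\{S_6,K_5\cup K_1\}$ (note both are $M_2$-free on $6$ vertices), and then apply Lemma~\ref{mc}: any $\{M_2,C_3\}$-free 3-graph on $6$ vertices not inside $S_6$ is inside $K_5\cup K_1$ or inside $H_0(6)$; since $K_5\cup K_1$ is already $1$-extremal, the largest one not contained in any $1$-extremal graph must sit inside $H_0(6)$, giving $\ex^{(2)}(6;\{M_2,C_3\})=|H_0(6)|=6$ with $H_0(6)$ the unique $2$-extremal graph; and since everything then embeds into $S_6$, $K_5\cup K_1$, or $H_0(6)$, the hierarchy stops.

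\medskip

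For $n\ge 7$ the first-order value is $\binom{n-1}{2}$ with unique extremal graph $S_n$, directly from Theorem~\ref{kahn} (the $7$-vertex competitor $\up(\{u,v\})\cup\binom{V\setminus\{u,v\}}{3}$ contains $M_2$, hence is disqualified, as the remark before the corollary notes). For the second order, Lemma~\ref{mc} says any $\{M_2,C_3\}$-free 3-graph not inside $S_n$ is inside $K_5\cup(n-5)K_1$ (which has $10$ edges) or inside $H_0(n)$ (which has $n$ edges); since neither of these is contained in $S_n$ (each contains a $K_4$, or at least a $P_2$ not through a common vertex — more simply, $K_5\cup(n-5)K_1$ has three pairwise-intersecting edges with empty common intersection, and so does $H_0(n)$), both are genuine candidates for $2$-extremal, and $\ex^{(2)}(n;\{M_2,C_3\})=\max\{10,n\}$. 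When $n\neq 10$ exactly one of the two achieves the max, the other achieves $\min\{10,n\}$ and is not contained in the $2$-extremal one (again by the empty-common-intersection witness, or: $K_5$ is not a sub-3-graph of $H_0(n)$ since $H_0(n)$ has only four vertices of positive degree beyond the star edges — actually $H_0(n)$ restricted to any $5$ vertices has at most $4+1$ edges, fewer than $\binom 53=10$, so $K_5\not\subseteq H_0(n)$; and $H_0(n)\not\subseteq K_5\cup(n-5)K_1$ for $n\ge 7$ since $H_0(n)$ has $n>5$ non-isolated vertices), so it is $3$-extremal, giving $\ex^{(3)}(n;\{M_2,C_3\})=\min\{10,n\}$; and since everything embeds into $S_n$, $K_5\cup(n-5)K_1$, or $H_0(n)$, no $\ex^{(s)}$ exists for $s\ge 4$. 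When $n=10$ the two graphs tie at $10$ edges, so they are both $2$-extremal, $\Ex^{(2)}(10;\{M_2,C_3\})=\{K_5\cup 5K_1,H_0(10)\}$, and no higher order survives.

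\medskip

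The main obstacle, though it is light, is the incomparability bookkeeping: at each stage one must check that the claimed $s$-extremal graphs are genuinely not sub-3-graphs of any lower-order extremal graph, and that the maximality forces the hierarchy to terminate exactly where stated. All the needed non-containments reduce to two easy facts — $K_5\cup(n-5)K_1$ and $H_0(n)$ are each maximal $\{M_2,C_3\}$-free (adding any edge creates $M_2$ or $C_3$), and neither embeds in the star nor in each other for $n\ge 7$ — so the verification is a short finite check rather than a real difficulty. The only genuinely delicate point is keeping straight, for $n=10$ versus $n\neq 10$ and for $n\le 5$ versus $n=6$ versus $n\ge 7$, which graphs land in which generation; I would present this as a short case split mirroring the six items of the corollary.
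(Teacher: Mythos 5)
Your proposal is correct and follows essentially the same route as the paper, which presents Corollary~\ref{MC} as an immediate consequence of Theorem~\ref{kahn} and Lemma~\ref{mc} without further proof; your edge counts ($|K_5\cup(n-5)K_1|=10$, $|H_0(n)|=n$) and the non-containment checks among $S_n$, $K_5\cup(n-5)K_1$, and $H_0(n)$ are exactly the bookkeeping the authors leave implicit.
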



\subsection{Proof of Proposition \ref{6}} To prove Proposition \ref{6}, we need to show that among all $1024$ labeled intersecting
$3\textrm{-graphs}$ on 6 vertices there are exactly 13 isomorphism types listed therein. We already know from
Theorem \ref{kahn}, that only two of these 3-graphs are $C_3$-free, namely, $S_6$ and $K_5\cup K_1$.

 Not without a reason,
we classify the remaining 3-graphs $H$, that is, those containing $C_3$, with respect to the number
of  vertex covers of size 2. Since every edge in an intersecting 3-graph is its vertex cover, the
minimum size of a vertex cover is either 2 or 3 (1 is  impossible due to the presence of $C_3$).
Let us call a cover set of size 2, simply a \emph{2-cover}.

 Let $C$ be a copy of $C_3$ on vertex set $U=\{x_1,x_2,x_3, y_1,y_2,y_3\}$ and with edge set
$C=\{\{x_i,y_j,x_k\}: \{i,j,k\}=\{1,2,3\}\}. $ Note that there are six different 2-covers of
$C$: $T_i=\{x_i,y_i\}$, $i=1,2,3$, $T_4=\{x_1,x_2\}$, $T_5=\{x_2,x_3\}$ and $T_6=\{x_1,x_3\}$.
Thus, the 2-covers of $H$  must be among these  six. But, by Fact \ref{intcov},
there are no disjoint 2-covers in $H$, so there are at most three 2-covers in $H$. Recall also that if a pair $T$ is a
2-cover in a maximal intersecting 3-graph $H$, then   $H\supseteq\up(T)$.

\noindent{\bf Case 1:} there are three 2-covers in $H$. Up to isomorphism, there are only two
possibilities: either $T_4,T_1,T_6$ are the 2-covers in $H$ or  $T_4,T_5,T_6$ are the 2-covers in $H$. In
each case, there are exactly ten triples belonging to $C$ or containing at least one of these
2-covers, so there are no more triples in $H$. Then, it is easy to check that $H$ is isomorphic to $H_1(6)$ (in the first
case) or $H$ is isomorphic to $H_2(6)$ (the second case).

\smallskip

\noindent{\bf Case 2:} there are exactly two 2-covers in $H$. Without loss of generality
either $T_1$ and $T_4$ or $T_4$ and $T_6$ are the unique 2-covers in $H$. In both cases $H$ must
contain all triples
 that contains at least one of these sets. Therefore, in the first case  $H$ contains the following triples:
$T_1\cup\{v\}$, where $v\in \{x_2,x_3,y_2,y_3\}$ and $T_4\cup \{v\}$ where $v\in \{x_3,y_2\}$.
Moreover, since $T_6$ is not a 2-cover of $H$, there must be in  $H$ a triple disjoint from $T_6$
but touching both, $T_1$ and $T_4$, and all the edges of $C$. There is only one such triple,
namely $\{x_2,y_1,y_2\}$. We have $|H|= 10$ and $H\cong H_3(6)$.

If $T_4,T_6$ are the unique 2-covers in $H$, then, as above,  $H$ contains all the triples
containing $T_4$ or $T_6$, namely $T_4\cup \{v\}$, where $v\in \{x_3,y_1, y_2\}$, and $T_6\cup
\{v\}$, where $v\in \{y_1, y_3\}$. Again, since neither $T_1$ nor $T_5$ is a 2-cover in $H$,  $H$
must contain two intersecting triples disjoint from $T_1$ and $T_5$, respectively, but touching
both 2-sets, $T_4$ and $T_6$, and all the edges of $C$. Up to isomorphism there is only one
possibility for this: $\{x_2,x_3,y_3\}\in H$ and $\{ x_1,y_1,y_3\}\in H$. As before we have
$|H|=10$ and $H\cong H_3(6)$.

\smallskip

\noindent{\bf Case 3:} there is exactly one 2-cover in $H$. We claim that $H$ is isomorphic to one of $H_4(6), H_5(6)$, and $H_6(6)$.
Note that all three have a similar structure: there is one 2-cover $\{x,y\}$, and so each consists of all four edges containing it, plus the edges adjacent to $x$ but not $y$ and vice versa. Let $L_i(x)$ be the set of pairs making an edge with $x$ but not with $y$ in $H_i(6)$, $i=4,5,6$, and we define $L_i(y)$ analogously. Referring to the definitions of $H_i(6)$, $i=4,5,6$, in Introduction, we see that $L_4(x)=L_4(y)$ is the (graph) triangle on $z,v,w$; $L_5(x)$ is the path $zvwu$, while $L_5(y)$ is the path $zwvu$, so these two paths share the middle pair; finally, $L_6(x)$ is the (graph) 4-cycle $zvwuz$, while $L_6(y)$ is a (graph) matching consisting of the two diagonals of that 4-cycle, $zw$ and $uv$.

Up to isomorphism there are only two subcases. Either $T_1$ or $T_4$ is the unique 2-cover of $H$.
Assume first it is $T_1$. Then we do not need to worry about $T_2,T_3$, and $T_5$ as they are all disjoint from $T_1$ (recall that $\up(T_1)\subset H$). To prevent $T_4$ and $T_6$ from being also 2-covers of $H$, there must be an edge or edges in $H$ disjoint from those two pairs, but intersecting $T_1$.

Assume first that $\{y_1,y_2,y_3\}\in H$ is such an edge. It takes care of both, $T_4$ and $T_6$. As $H$ is maximal, it must also contain two more edges, say $\{x_1,x_3,y_3\}$ and $\{x_1,x_2,y_2\}$ (there are 3 more options here, in which either of these two edges is replaced by its complement; we leave their analysis to the reader). So, there are 10 edges altogether. We see that the pairs making an edge with $x_1$, but not with $y_1$, form the 4-cycle $x_2y_2x_3y_3x_2$, and there are only two pairs, $x_2x_3$ and $y_2y_3$ making an edge with $y_1$, but not with $x_1$. Thus, $H\cong H_6(6)$ (with  $x:=x_1$ and $y:=y_1$).

If $\{y_1,y_2,y_3\}\notin H$, then its complement $\{x_1,x_2,x_3\}\in H$. The only edges which `exclude' $T_4$ and $T_6$ are $\{x_3,y_1,y_3\}$ and $\{x_2,y_1,y_2\}$, respectively. So, again, we have 10 edges in $H$, but this time the pairs making an edge with $x_1$, but not with $y_1$, form the path $y_3x_2x_3y_2$, while in the opposite case, it is the path $y_2x_2x_3y_3$. Thus, $H\cong H_5(6)$.

Assume now that the unique 2-cover of $H$ is $T_4$. We need to `exclude' four other 2-covers of $C$, namely, $T_1,T_2,T_5$, and $T_6$, from being present in $H$. There are four subcases. In the first one, let $\{x_1,y_1,y_3\}\in H$ and $\{x_2,y_2,y_3\}\in H$ (the remaining 3 cases come from negating one or both clauses in this conjunction).
The first of these two edges excludes $T_2$ and $T_5$, while the other does the same to $T_1$ and $T_6$. By maximality there are  two more edges in $H$, say $\{x_1,y_1,y_2\}$ and $\{x_2,y_2,y_1\}$ (again, we skip 3 more cases with involving the complements). A similar analysis of the graph links of $x:=x_1$ and $y:=x_2$ leads to a conclusion that, again, $H\cong H_5(6)$.

Consider now the subcase when $\{x_1,y_1,y_3\}\notin H$ and $\{x_2,y_2,y_3\}\notin H$. Then, the complements $\{x_2,x_3,y_2\}\in H$ and $\{x_1,x_3,y_1\}\in H$ exclude $T_1$ and $T_2$, respectively. In addition, we must also have $\{x_1,y_1,y_2\}\in H$ and $\{x_2,y_1,y_2\}\in H$ which take care of $T_5$ and $T_6$, respectively. A similar analysis reveals that  $H\cong H_4(6)$.

The remaining two subcases are symmetrical, so we consider only one of them. Let $\{x_2,x_3,y_2\}\in H$ and $\{x_2,y_2,y_3\}\in H$. These two edges, together with $\{x_1,y_1,y_2\}$ and $\{x_1,x_3,y_3\}$, exclude all four forbidden 2-covers, $T_1$, $T_6$, $T_5$, and $T_2$. A quick look at the links of $x_1$ and $x_2$ shows that this time  $H\cong H_5(6)$.

\smallskip

\noindent{\bf Case 4:} there is no 2-cover  in $H$. This means that for each 2-cover of $C$,
$T_i$, $i=1,\dots,6$, there is an edge in $H$ disjoint from it. A tedious case by case analysis of
the $2^7$ remaining choices between triples and their complements (we have already made three
choices by implanting the triangle $C$ in $H$) leads always to one of the 3-graphs $H_i(6)$,
$i=7,\dots,11$. We omit the details.

\subsection{Proof of Theorem \ref{main}}\label{pm}

    Let $H$ be a \emph{maximal} $M_2$-free 3-graph with $V(H)=V$ and $|V|=n\ge 7$,
 not contained in a star $S_n$ and $K_5\cup (n-5)K_1$. Then by Lemma \ref{mc} we have
 $C_3\subset H$ (note that since $H_0(n)$ is not maximal, $H\nsubseteq H_0(n)$). We say that a copy $C$ of the triangle, $C\subset H$, is \emph{a triangular heart} of $H$ if $V(C)$ is a heart of $H$ (see the definition of heart in Subsection \ref{struc}). Let $C\subset H$ be a copy of $C_3$ in $H$. Set
    $
    U=V(C)=\{x_1,x_2,x_3,y_1,y_2,y_3\}\subset V,
    $
and, $
C=\{\{x_i,y_j,x_k\}: \{i,j,k\}=\{1,2,3\}\}. $ Further, let $ W=V\setminus U$, $|W|=n-6. $ Since
$H$ is intersecting, every edge of $H$ intersects $U$ on at least 2 vertices.

\noindent{\bf Case 1:}  $H$ has no triangular heart. Then there exist two edges $h_1,h_2\in H$ with $h_1\cap
h_2\cap U=\emptyset$. Without loss of generality let $h_1=\{x_1,y_1,w\}$, where $w\in W$. We start
with the case $h_2=\{x_2,y_2,w\}$ (the case $h_2=\{x_3,y_3,w\}$ is symmetrical). There exists only
one 2-cover of the edge set $C\cup \{h_1,h_2\}$, namely $T=\{x_1,x_2\}$. Therefore, all the edges
$h\in H$ such that $h\cap (W\setminus \{w\})\neq \emptyset$ contain $T$. There are only two triples
which are disjoint from $T$ and  intersect all the edges of $C\cup \{h_1,h_2\}$, namely
$h_3=\{x_3,y_3,w\}$ and $h_4=\{y_1,y_2,y_3\}$.

If for $i=3,4$, $h_i\notin H$, then the triangle
$C'=\{\{x_1,w,y_1\},\{y_1,x_2,x_3\},\{x_3,y_2,x_1\}\}$ is a triangular heart of $H$, a contradiction.
 Therefore at least one of the edges, $h_3$ or $h_4$ belongs
to $H$.
    If both $h_3\in H$ and $h_4\in H$, then $H[U\cup\{w\}]$ is the Fano plane $F_7$ which is a maximal
      intersecting family. Moreover, there are no 2-covers in $F_7$, so there are no other edges in
      $H$, and we conclude that $H=F_7\cup (n-7)K_1$.

 Next, let $h_3\in H$ and $h_4\notin H$.
 Then, since $H$ is maximal, it contains four more edges, $\{x_1,x_2,x_3\}$, $\{x_1,x_2,w\}$, $\{x_1,x_3,w\}$
 and $\{x_2,x_3,w\}$, and so, $H=F_{10}\cup (n-7)K_1$. Otherwise $h_4\in H$ and, since $h_3\notin H$,
 there are four more edges in $H$, $\{x_1,y_1,y_2\}$, $\{x_1,y_1,x_2\}$, $\{x_2,y_2,y_1\}$ and
 $\{x_2,y_2,x_1\}$. Again, $H=F_{10}\cup (n-7)K_1$.

    Now we move to the case when for $i=2,3$, $\{x_i,y_i,w\}\notin H$. Then, since $H$
  is an intersecting family not containing a triangular heart, we must have $h_2=\{x_2,x_3,w\}\in H$. This time there
  are two intersecting 2-covers of $C\cup \{h_1,h_2\}$, $T_1=\{x_1,x_2\}$ and $T_2=\{x_1,x_3\}$. Like above,
  since there is no triangular heart in $H$, there must be in $H$ two edges $h_3$ and $h_4$ such that
  $h_3\cap T_1=\emptyset$ and
     $h_4\cap T_2=\emptyset$. We have no other choice but set $h_3=\{x_3,y_3,y_1\}$ and
      $h_4=\{x_2,y_2,y_1\}$. So, there are only three more edges in $H$, $\{x_1, x_2, x_3\}$,
       $\{x_1, x_2, y_1\}$ and $\{x_1, x_3, y_1\}$.
        Thus,  $H=F_{10}\cup (n-7)K_1$ again.
        As $|F_7|=7$ and $|F_{10}|=10$, these two 3-graphs do not
        play any role in establishing the first four Tur\'an numbers for $M_2$.

    \bigskip

\noindent{\bf Case 2:} all the edges of $H$ intersect each other on $U=V(C)$, that is, $C$ is a triangular heart of~$H$.
By Fact \ref{maxheart}, the induced sub-3-graph $H[U]$ is maximal.
As $H[U]\supset C$, by Proposition \ref{6}, $H[U]$ is isomorphic to one of the 3-graphs $H_i(6)$, $i=1,\dots,11$.

Since $H$ is maximal, it consists of all triples  containing any 2-cover of
$H[U]$ and a vertex outside $U$. Hence, if $H[U]\cong H_i(6)$, then $H_i(n)\cong H_i(n)$, $i=1,\dots,11$. This, in view of Lemma \ref{mc}
and the `heartless' case 1, proves all parts   of Theorem \ref{main}. \qed

\section{Concluding Remarks}

Upon completing this project, we realized that the maximal intersecting 3-graphs with $\tau=3$, can
be fished out from a huge family of so called \emph{1-special} 3-graphs described in \cite{HY} (see
Theorem 5 therein). However, the authors of \cite{HY} admit that their family contains several
isomorphic 3-graphs and do not provide any proof. Also recently, we noticed that independently of
us, Kostochka and Mubayi \cite{KM} (see Theorem 8 therein) determined all maximal intersecting
3-graphs with more than 10 edges.

Although, both these results together can be, in principle, used to derive the main results of this
paper, we feel that our streamlined  and unified approach, as well as the statement in terms of the
hierarchy of Tur\'an numbers might still be interesting. Moreover, in \cite{KM} the authors attempt
to describe all maximal, intersecting $k$-graphs for $k\ge4$. Their result is, however, restricted
to $k$-graphs with large number of vertices and large number of edges. We believe that our approach
has the potential to be generalized to all $k$-graphs.



\begin{thebibliography}{4}
\bibitem{PLWomen} M. Ca{\l}czy\'nska-Kar{\l}owicz, Theorem on families of finite sets, \emph{Bull. Acad. Pol. Sci., Ser. Sci Math. Astron. Phys.} 12 (1964) 87-89.
    \bibitem{CK}
    R. Cs\'{a}k\'{a}ny, J. Kahn, A homological Approach to Two Problems on Finite Sets, \emph{Journal of Algebraic Combinatorics} 9 (1999), 141-149.
    \bibitem{EKR}
    P. Erd\"os, C. Ko, R. Rado, Intersection theorems for systems of finite sets, \emph{Quart. J. Math. Oxford Ser.}  12(2) (1961), 313-320.
    \bibitem{FF}
    P. Frankl, Z. F\"{u}redi, Exact solution of some Tur\'{a}n-type problems, \emph{J. Combin. Th. Ser. A} 45 (1987), 226-262.
    \bibitem{FF2}
    P. Frankl, Z. F\"{u}redi, Non-trivial Intersecting Families, \emph{J. Combin. Th. Ser. A} 41 (1986), 150-153.

\bibitem{HK} J. Han, Y. Kohayakawa, Maximum size of a non-trivial intersecting uniform family which is not a
subfamily of the Hilton-Milner family, \emph{Proc. of AMS}, accepted.
\bibitem{HY}
M. A. Henning, A. Yeo, Transversals and matchings in 3-unoform hypergraphs, \emph{Europ. J.
Combin.}, 34 (2013), 217-228.
    \bibitem{HM}
    A.J.W. Hilton, E.C. Milner, Some intersection theorems for systems of finite sets, \emph{Quart. J. Math. Oxford Ser.}  18(2) (1967), 369-384.
    \bibitem{JPRr}
    E. Jackowska, J. Polcyn, A Ruci\'nski, Multicolor Ramsey numbers and restricted Tur\'an numbers for the loose 3-uniform path of length three, submitted.
   \bibitem{KM}
    A. Kostochka, D. Mubayi, The structure of large intersecting families,
    \emph{Proc. Amer. Math. Soc.}, to appear.
    \bibitem{Tuza} Zs. Tuza, Critical hypergraphs and intersecting set-pair systems, \emph{J. Combin. Th. Ser. B} 39 (1985) 134-145.
\end{thebibliography}
\end{document}